\newcommand{\weps}{\widetilde{\epsilon}}
\newcommand{\NN}{\mathbb{N}}
\newcommand{\ZZ}{\mathbb{Z}}
\newcommand{\RR}{\mathbb{R}}
\newcommand{\CC}{\mathbb{C}}
\newcommand{\ud}{\textup{d}}
\newcommand{\SSS}{\mathcal{S}}
\newcommand{\TT}{\mathbb{T}}
\newcommand{\ECG}{\textup{ECG}}
\newcommand{\II}{\textup{II}}
\newcommand{\I}{\textup{I}}
\newcommand{\IHR}{\textup{IHR}}
\newcommand{\IRR}{\textup{IRR}}
\newcommand{\Resp}{\textup{Resp}}
\theoremstyle{definition}
\newtheorem{theorem}{Theorem}[section]
\newtheorem{defn}[theorem]{Definition}
\newtheorem{lemma}[theorem]{Lemma}
\newtheorem{cor}{Corollary}[theorem]
\theoremstyle{remark}
\newtheorem*{remark}{Remark}
\begin{document}
\begin{frontmatter}
\title{Instantaneous frequency and wave shape functions (I)}
%\author[MATH]{Ingrid~Daubechies}
%\ead{ingrid@math.princeton.edu}

%\author[MATH]{Hau-tieng~Wu}
\author{Hau-tieng~Wu}
\ead{hauwu@math.princeton.edu}

%\address[MATH]{Department of Mathematics, Princeton University, Fine Hall, Washington Road, Princeton NJ 08544-1000 USA.}
\address{Department of Mathematics, Princeton University, Fine Hall, Washington Road, Princeton NJ 08544-1000 USA.}

\begin{abstract}
Although one can formulate an intuitive notion of instantaneous frequency, generalizing "frequency" as we understand it in e.g. the Fourier transform, a rigorous mathematical definition is lacking. In this paper, we consider a class of functions composed of waveforms that repeat nearly periodically, and for
which the instantaneous frequency can be given a rigorous meaning. We show that Synchrosqueezing can be used to determine the instantaneous frequency of functions in this class, even if the waveform is not harmonic, thus generalizing earlier results for cosine wave functions. We also provide real-life examples and discuss the advantages, for these examples, of considering such non-harmonic waveforms.
\end{abstract}

\begin{keyword}
Instantaneous frequency, wave shape function, reassignment, Synchrosqueezing transform, Sleep Stage, heart rate variability, respiratory rate variability
\end{keyword}
\end{frontmatter}

\section{Introduction}
The term ``instantaneous frequency'' is somewhat of an oxymoron. In many cases, researchers who seek to decompose signals into different components, and who want to determine the ``instantaneous frequency''\cite{picinbono:1997,picinbono_martin:1983} of each, really seek to solve problems of the following form:

{\em given a function of the form
\begin{equation}\label{foft}
f(t)=\sum_{k=1}^K A_k(t)\cos(2\pi\phi_k(t)),\mbox{ with } A_k(t),\phi'_k(t)>0 ~ \forall t,
\end{equation}
compute $A_k(t)$ and $\phi'_k(t)$ and/or describe their properties.}

The Synchrosqueezing transform, a reassignment method \cite{kodera_gendrin_villedary:1978,auger_flandrin:1995} originally introduced in the context of audio signal analysis \cite{Daubechies1996}, and further analyzed in \cite{Daubechies2010,Thakur2010,wu_flandrin_daubechies:2011,brevdo_fuckar_thakur_wu:2012}, provides a way to determine $A_k(t)$ and $\phi'_k(t)$ uniquely, up to some pre-assigned accuracy, under some conditions on $f(t)$.

For some applications, decompositions of the form (\ref{foft}) are too restrictive. Consider, for instance, the function $f$ illustrated in the left plot of Figure \ref{shape0}(a); it is a toy example of the type
\begin{equation}\label{toy1}
f(t)=A(t)s(2\pi\phi(t))
\end{equation}
where, as before, $A(t)$ and $\phi'(t)$ vary slowly, and $s$ is now no longer a cosine, but
the
%$2\pi$
periodic extension of the function in the right plot of Figure \ref{shape0}(a).
It is clear that this $f(t)$ can be decomposed as in (\ref{foft}), simply by replacing $s$ by its Fourier expansion; however, the representation (\ref{toy1}) is much more efficient because it uses fewer terms.
Another way in which one could reduce $f(t)$ to an expansion of type (\ref{foft}), with a {\em single} term, would be to ``absorb'' some of the properties of $s$ into a modified phase function
$m(t)\,:=\,\cos^{-1}\left(s(2\pi\phi(t)) \right)$.
This is the preferred solution in \cite{HuangWuLong:09}, where $m(t)$ is then called the ``intra-wave'' modulation, indicative of the nonlinear nature of the wave process generating $f(t)$.
Although this would work for the example in Figure \ref{shape0}(a), it is not always possible to do this.
Consider the function $f$ in the left plot of Figure \ref{shape0}(b), clearly of the same type as in Figure \ref{shape0}(a); one would expect that the same remarks apply to both examples.
However, because $s$ has several maxima, it can not be written as $\cos( m(t))$, with $m$ a monotonic mapping on $[0,2\pi]$. A single-term expansion of type (\ref{foft}) can thus not deal with this example by using only ``intra-wave modulation''.
To nevertheless still reduce $f(t)$ to an expansion of type (\ref{foft}) with a single term,
one can, instead, attempt to ``absorb'' some of the properties of $s$ into a modified amplitude $\widetilde{A}$.  In this particular case,
\begin{equation}\label{shapeL1}
s(t)=\left[\cos(0.8\cos(t))-\frac{\sin(t)}{\cos(t)}\sin(0.8\cos(t))-1.4\cos_+(t+1/6)^2\right]\cos(t)=:w(t)\cos(t),
\end{equation}
where $\cos_+(t)=\max\{0,\cos(t)\}$. So one can write $f(t)=\widetilde{A}(t)\cos(2\pi\phi(t))$, with $\widetilde{A}(t)=w(2\pi\phi(t))A(t)$. Note that this amplitude $\widetilde{A}$ varies much faster than $A$, masking the only slowly changing wave-pattern of $f(t)$.
Even this solution is not always applicable.
Consider the function $f$ plotted on the left of Figure \ref{shape0}(c). Absorbing the ``extraneous'' extrema of the basic wave-pattern into a special $\tilde{A}$ would lead to an amplitude that is no longer always positive, clearly an undesirable trait.

Although the examples in Figure \ref{shape0} are just toy examples, similar phenomena can be observed in real-life signals, for example, in electrocardiography (ECG) (see Figure \ref{figex1}).
As we shall see below, it is important, in these real-life examples, to tease apart the characteristics of the ``shape'' $s(t)$ from the slow variations in $\phi'(t)$ and $A(t)$. Rather than insisting on a representation of type (\ref{foft}), we  are therefore, in this paper, interested in decompositions of the type
\begin{equation}\label{foft2}
f(t)=\sum_{k=1}^K A_k(t)s_k(2\pi\phi_k(t)),
\end{equation}
where we shall give the name ``wave-shape function'' (or, shorter, ``shape function'') to the $2\pi$ periodic functions $s_k$, generalizing the cosine functions of (\ref{foft}) and \cite{Daubechies2010}, and where we assume that $|A_k'(t)|$ and $|\phi''_k(t)|$ are small compared with $\phi'_k(t)$, as in \cite{Daubechies2010}. Functions of type (\ref{foft2}) can be found in many applications. We already mentioned ECG signals; another medical signal of this same form is respiration.

The additional layer of generality in (\ref{foft2}), when compared to (\ref{foft}), adds to the complexity of determining desirable decompositions of type (\ref{foft2}) for a (noisy) signal
$f$. Even for decompositions of type (\ref{foft}), uniqueness is not guaranteed
(see e.g. \cite{Daubechies2010}); this absence of unqueness can obviously
be only more pronounced for decompositions of type (\ref{foft2}).
Even when the decomposition is unambiguous, the algorithmic task will necessarily be more complex, since not only the $A_k(t)$ and $\phi_k(t)$, but also the $s_k(t)$ need to be determined, in general.
We shall concentrate here on the determination of the amplitudes $A_k(t)$, the instantaneous frequencies $\phi'_k(t)$ and the components $A_k(t)s_k(2\pi\phi_k(t))$ for the more generalized expansions (\ref{foft2})  for a restricted class of wave shape functions. For more general wave shape functions $s_k$, the determination of the wave shapes $s_k$ themselves will be discussed in a sub-sequential paper.

The main result of this paper is that, under the same technical conditions as in \cite{Daubechies2010} (the $\phi_k'$ have to be sufficiently separated), we can determine $A_k(t)$ and $\phi'_k(t)$ and reconstruct each component via Synchrosqueezing, for suitable wave forms $s_k$.  We shall apply this method to ECG and respiration signals, and show the clinical potential of the results. In the next section, we first discuss some properties of these signals in more detail. Section 3 states and proves the related theorems, which generalize the results provided in \cite{Daubechies2010}.

\begin{figure}[h]\label{shape0}
\begin{minipage}{.4 in}
(a)
\vspace*{2.5 cm}

(b)
\vspace*{2.5 cm}

(c)
\end{minipage}
%\hspace*{.1 in}
\begin{minipage}{15 cm}
\subfigure{\resizebox{14cm}{9cm}{\includegraphics{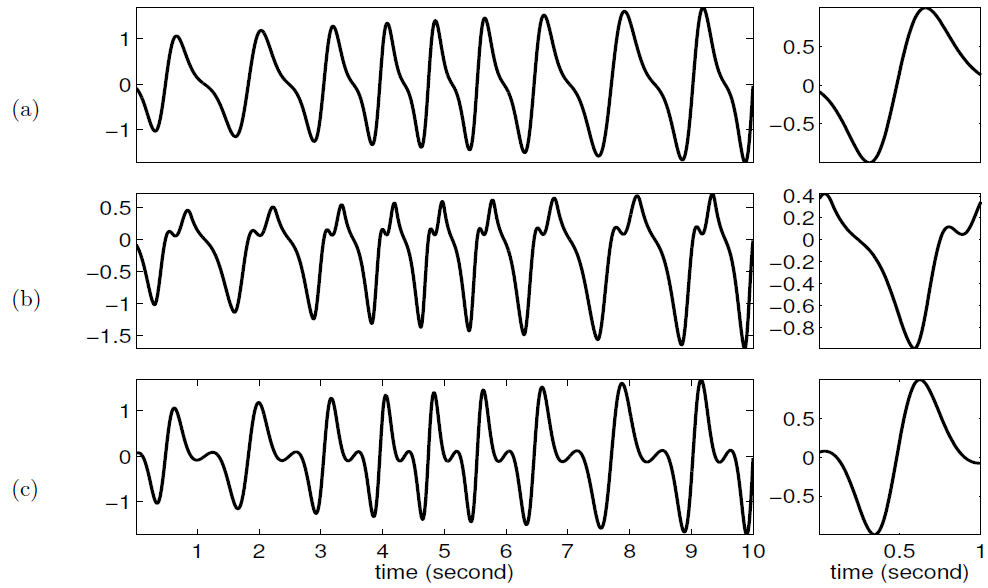}}}
\end{minipage}
\caption{Toy examples of the type $f(t)=A(t)s(2\pi \phi(t))$. The amplitude modulation function and the phase function are identical for the three cases:
$A(t)=\sqrt{1+0.2t}$ and $\phi(t)=t+0.3\cos(t)$. In (a) the shape function $s(t)$ is given
by $s(t)=\left[\cos(0.8\cos(t))-\frac{\sin(t)}{\cos(t)}\sin(0.8\cos(t))\right]\cos(t)$; in (b): $s(t)=\left[\cos(0.8\cos(t))-\frac{\sin(t)}{\cos(t)}\sin(0.8\cos(t))-1.4\cos_+(t+1/6)^2\right]\cos(t)$; in (c): $s(t)=\left[\cos(1.2\cos(t))-\frac{\sin(t)}{\cos(t)}\sin(1.2\cos(t))\right]\cos(t)$.}
\end{figure}
% phi=t+0.3*cos(t);
% x=cos(A*cos(2*pi*phi))-sin(2*pi*phi).*sin(A*cos(2*pi*phi))./cos(2*pi*phi);
% am=sqrt(1+0.2*t);
% plot(t,am.*x.*cos(2*pi*phi))
% A=1.2 for shape2 and 0.5 for shape3

\section{Two biomedical signals}

The ECG signal can be measured easily and cheaply, and its clinical usefulness is well established.
It is commonly accepted to model the ECG signal as a current dipole vector undergoing a periodic
motion in $\RR^3 $ \cite{keener1998}; the recorded ECG signal is then viewed as
the orthogonal projection of this
dipole vector onto a fixed axis.\footnote{In fact, the situation is a bit more complex. In addition
to the periodic motion for every heartbeat, the trajectory in $\RR^3$ of the current dipole vector
is deformed by other phenomena such as breathing. We
shall ignore these effects here.} The ECG signal, recording the dynamics of the electrical activity
of the heart, is a collection of periodic oscillating time series, one per channel, each corresponding to an ECG lead \cite{Guyton:00,ecg}. The {\em waveforms} provide a lot of
information about the anatomic or electrophysiological structure of the heart, essential in
ascertaining certain medical conditions, such as ischemia or atrial fibrillation. On the other hand, the
{\em variation of the time intervals between sequential heart beats}, referred to as ``heart rate
variability'' (HRV), has been shown in the past few decades to be related to more general physiological
dynamical processes \cite{hrv}. To understand these different types of dynamics of the physiological
system via the recorded ECG signal, it thus is beneficial to separate the shape of the oscillation
from the variability of the time intervals between sequential oscillatory waveforms.

In this section, the example ECG signals are recorded from a healthy 33-year-old male, with 12 bit resolution and a sampling rate of 1000Hz. We show in Figure \ref{figex1} the lead I and lead II ECG signals, which we denote as $\ECG_\I(t)$ and $\ECG_{\II}(t)$, $t\in [0,T]$.
\begin{figure}[h]
\subfigure{
\includegraphics[width=0.95\textwidth]{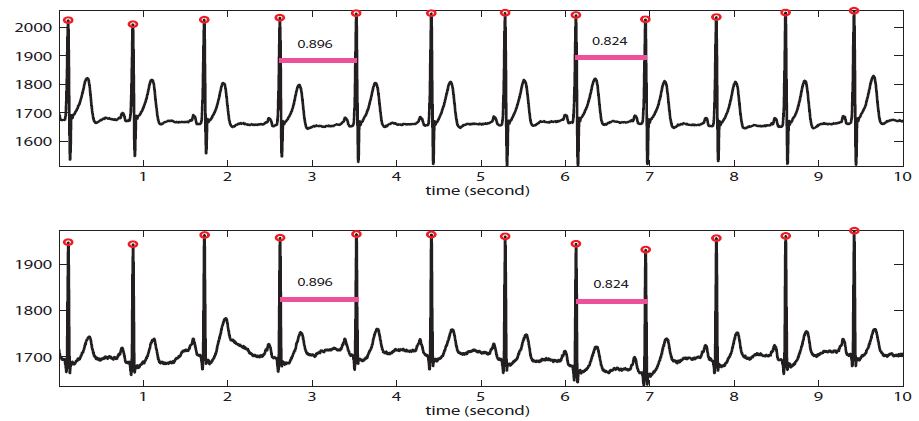}}
\caption{Top: the lead I ECG signal; bottom: the lead II ECG signal, where the red circles mark the locations of the R peaks. It is clear that the time interval between consecutive R-peaks is not constant.}\label{figex1}
\end{figure}
In the figure, the black curve is the usual ECG signal; the peaks indicated by red circles are
called the R peaks \cite{ecg}. One type of variation from one cycle to the next in the ECG signal
can be clearly tracked by the changing time intervals between consecutive R-peaks. In clinical
practice, the ``heart rate'' is given by simply counting the number of beats during a minute, that
is, the ``mean rate'' over a given time period. However, there is information hidden inside the HRV,
beyond the mean rate, that we want to describe quantitatively. To access this, let us first introduce
an intuitive definition of time-dependent instantaneous heart rate (IHR), as the inverse of the time
interval between the two most recent successive heart beats. We refer to this quantity as the {\em
intuitive instantaneous heart rate} (or {\em intuitive instantaneous frequency}) and denote it as $\IHR_i(t)$, where the subscript $i$ refers to the ``intuitive'' character of this definition.
Denoting by $t_k$ the location of the R-peak of the $k$-th heart beat, $k=1,\ldots,N$. $\IHR_i$ is
defined as the piecewise constant function plotted in Figure \ref{figex2},
\[
\IHR_i(t)=\frac{1}{t_{k}-t_{k-1}}\mbox{ if }t_k\leq t<t_{k+1}.
\]
Note that the definition of $\IHR_i$ echoes the etymology of  ``frequency'': counting how {\em frequently} a phenomenon occurs per unit time.
\begin{figure}[h]
\subfigure{
\includegraphics[width=0.95\textwidth]{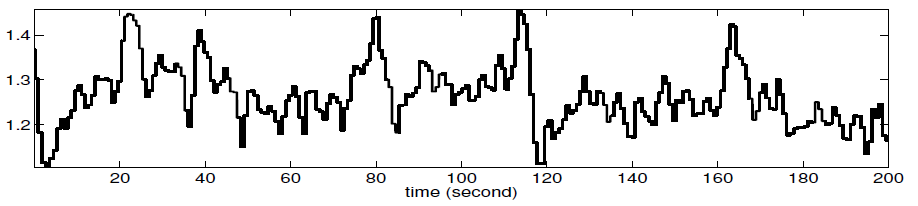}}
\caption{The intuitive instantaneous heart rate.}\label{figex2}
\end{figure}

To a very good approximation, the ECG signals can be modeled as
\[
\ECG_\ell(t)=A_\ell(t)s_\ell(\phi_E(t))+W_\ell(t),
\]
where $\ell$ stands for the lead (or channel) number (e.g., $\I$ or $\II$), and where $\phi_E(t)$ does not depend on $\ell$. (The subscript $E$ indicates that this is the $\phi(t)$ for the ECG signal; we shall meet other $\phi$ below.) The function $W_\ell(t)$ contains all the other components of the signal, including noise and low frequency baseline wandering.
We show in this paper that the Synchrosqueezing transform \footnote{The official code of the Synchrosqueezing transform is available in \url{http://www.math.princeton.edu/~ebrevdo/synsq/}} can identify $\phi_E'(t)$ for such ECG signals, with high accuracy, independently of the detailed properties of the wave shape functions $s_\ell(t)$, for $s_\ell$ within a certain class of functions (that contains ECG profiles).
Physiologically, the HRV reflects the periodic behavior of the current dipole vector; it should
thus not depend on how we project this dipole vector. In other words, our estimate should be
the same, whether we derive it from $\ECG_\I(t)$ or $\ECG_{\II}(t)$, i.e. whether the behavior in
time of the wave shape function it modulates is associated with lead I or lead II. It turns out that
the Synchrosqueezing estimate does indeed exhibit this invariance, as we demonstrate numerically
here. (A mathematical explanation of this will be given in Section \ref{theorem}.)  We denote the
estimated $\phi'_E$ as $\IHR_{\ell}(t)$ if it is estimated by computing the Synchrosqueezing
transform of $\ECG_{\ell}(t)$. Figures \ref{ecgIest}, resp. \ref{ecgIIest}  plot $\IHR_{\I}(t)$,
resp. $\IHR_{\II}(t)$ and show that in both cases $\IHR_{\ell}(t)$ is extremely close to $\IHR_i$.
In Figure \ref{ecgihrcompare},  $\IHR_{\I}$ and $\IHR_{\II}$ are plotted together, illustrating
directly that our estimate of $\phi'_E$ does not depend on which wave shape function was being
modulated.
(That the two shape functions $s_\I$ and $s_\II$ are different is clear from Fig.\ref{figex1}.)
Furthermore, this example indicates that our estimate
captures the  ``instantaneous frequency'': indeed, $\phi'_E$ matches $\IHR_i$
with high accuracy. This finding makes it possible to define an ``instantaneous heart rate'' for subjects with much less clearly defined heart beats: even if the $\IHR_i(t)$ can no longer be defined because the $t_k$ cannot be defined clearly, we can still estimate $\IHR_{\ell}(t)$.

%The red curve in the lower figure is the IHR estimated from $\ECG_{\II}(t)$ via the Synchrosqueezing transform. We denote this estimated IHR as $\IHR_{\II}(t)$. The $\IHR_i(t)$ is superimposed on top of $\IHR_{\II}(t)$ in green. We see that $\IHR_{\II}(t)$ matches $\IHR_i(t)$ accurately.

\begin{figure}[h]
\subfigure{
\includegraphics[width=0.95\textwidth]{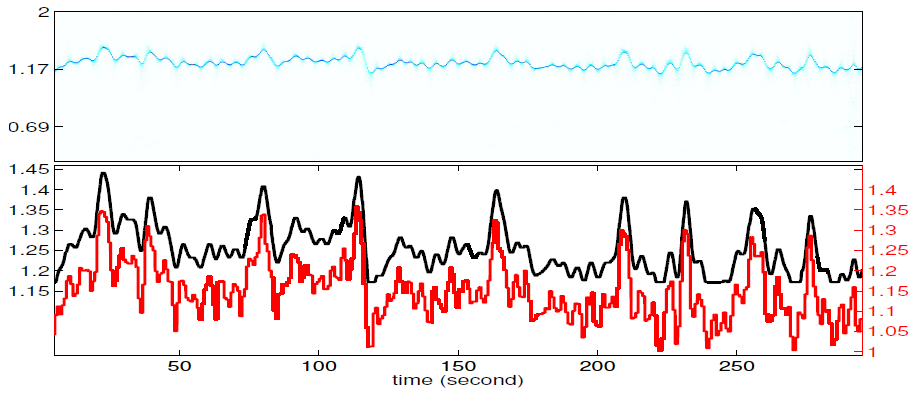}}
    \caption{Top: the Synchrosqueezing transform of $\ECG_{\I}(t)$; bottom: the black curve is $\IHR_{\I}(t)$ estimated from the Synchrosqueezing transform of $\ECG_{\I}(t)$ and the red curve is $\IHR_i(t)$ shifted down by $0.1$. }\label{ecgIest}
\end{figure}

\begin{figure}[h]
\subfigure{
\includegraphics[width=0.95\textwidth]{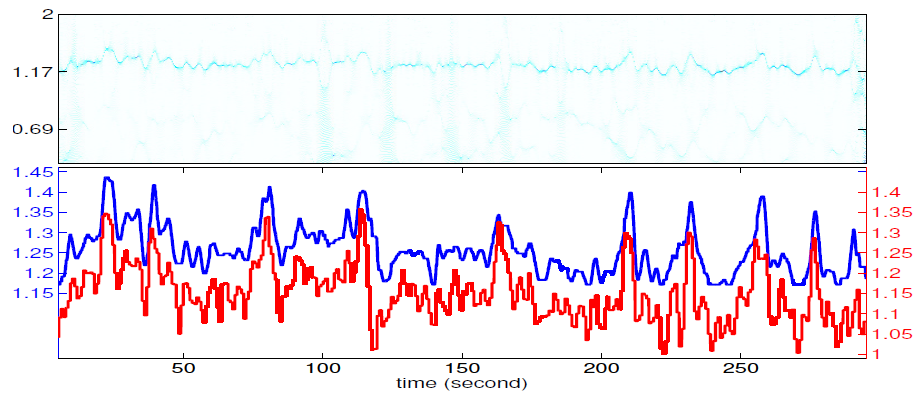}}
    \caption{Top: the Synchrosqueezing transform of $\ECG_{\II}(t)$; bottom: the blue curve is $\IHR_{II}(t)$ estimated from the Synchrosqueezing transform of $\ECG_{\II}(t)$ and the red curve is $\IHR_i(t)$ shifted down by $0.1$.}\label{ecgIIest}
\end{figure}

\begin{figure}[h]
\subfigure{
\includegraphics[width=0.95\textwidth]{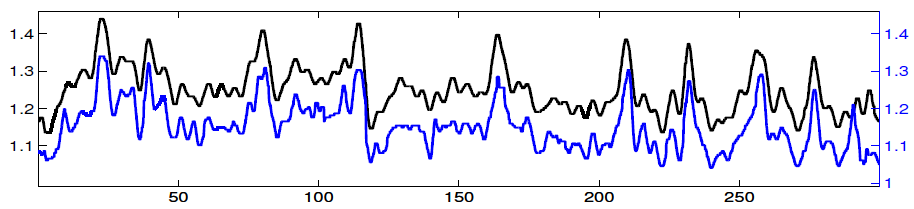}}
    \caption{The blue curve is $\IHR_{I}(t)$ and the black curve is $\IHR_{\II}(t)$ shifted down by $0.1$.}\label{ecgihrcompare}
\end{figure}

Our second example concerns the respiratory signal, i.e., the signal provided by a mechanical recording of the breathing process. Physiologically, the morphology of the recorded respiratory signal reflects the anatomical structure of our respiratory system \cite{Guyton:00}. In a first approximation, breathing is a periodic process; a closer look reveals that consecutive ``breathing peaks'' are not quite equally spaced (see Figure \ref{irri} below). This breathing rate variability (BRV) reflects the physiological dynamics \cite{Benchetrit2000,Wysocki2006,kou:2011}. An example of a respiratory signal $\Resp(t)$ is shown in Figure \ref{irri}. The signal was recorded from a 28-year-old healthy male, with a sampling rate of 20 Hz. Like the ECG signal, to a very good approximation, the respiratory signal can be modeled as
\[
\Resp(t)=A(t)s(\phi_R(t))+W(t),
\]
where the subscript $R$ indicates that this is the phase function $\phi(t)$ for the {\em respiratory} signal; the function $W(t)$ contains again all the other components of the signal, including noise and low frequency baseline wandering.

As in the discussion of the ECG signal, we first define an intuitive notion of time-dependent instantaneous breathing rate $\IRR_i(t)$ as the inverse of the time interval between the two most recent successive breathing cycles. A breathing cycle is defined to be the signal between two consecutive ends of the inspiration; these ends are marked as the red circles in Figure \ref{irri}. More precisely, we denote the location of the ends of the inspiration by $t_k$, $k=1,\ldots,M$ and define $\IRR_i(t)$ as follows:
$$
\IRR_i(t) = \frac{1}{t_k-t_{k-1}}\mbox{ when }t_k\leq t<t_{k+1}.
$$
The $\IRR_i(t)$ is plotted as the black piecewise constant curve in Figure \ref{irri}. Next, we apply the Synchrosqueezing transform directly on $\Resp(t)$ and get an estimation of its instantaneous frequency, denoted by $\IRR(t)$. In Figure \ref{irri}, the $\IRR(t)$ is superimposed on $\Resp(t)$ and $\IRR_i(t)$ to demonstrate that the estimated $\IRR(t)$ captures the notion of instantaneous frequency. Indeed, the spacing of respiration cycles in $\Resp(t)$ is reflected by $\IRR(t)$: closer spacing corresponds to higher $\IRR(t)$ values, and wider spacing to lower $\IRR(t)$ values. In Figure \ref{irrcompare} $\IRR(t)$ and $\IRR_i(t)$ are put together for comparison; they are clearly closely related.

\begin{figure}[h]
\subfigure{
\includegraphics[width=0.95\textwidth]{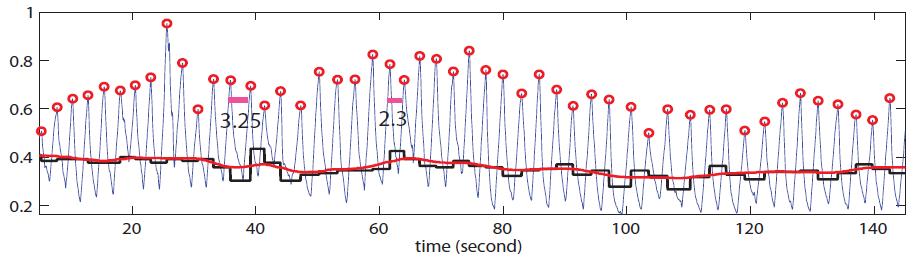}}
\caption{The blue curve is the respiratory signal $\Resp(t)$; the black piecewise constant curve is the intuitive respiration rate $\IRR_i(t)$; the smooth red curve is the $\IRR(t)$ estimated via Synchrosqueezing (or SSTIF). The vertical axis is for $\IRR(t)$ and $\IRR_i(t)$.}\label{irri}
\end{figure}

\begin{figure}[h]
\subfigure{
\includegraphics[width=0.95\textwidth]{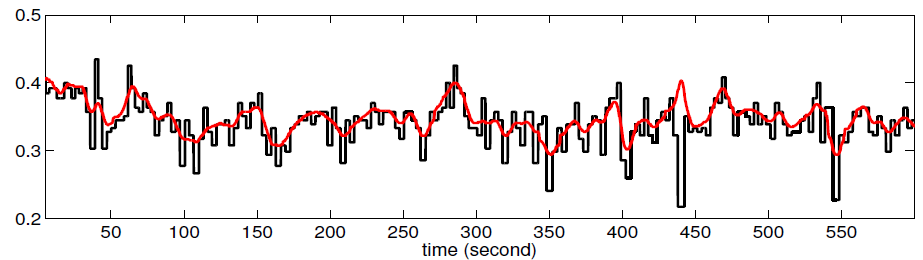}}
\caption{The red curve is the estimated instantaneous frequency $\IRR(t)$ of the respiratory signal from the Synchrosqueezing transform, and the black piecewise constant curve is the intuitive respiration rate $\IRR_i(t)$.}\label{irrcompare}
\end{figure}

For the sake of convenience, we shall use the acronym SSTIF for the {\em \textbf{S}ynchro\textbf{S}queezing \textbf{T}ransform-derived \textbf{I}nstantaneous \textbf{F}requency} in what follows. Figure \ref{sleepirr} shows a different breathing signal, and illustrates a direct physiological application of the respiration SSTIF. Sleep is a universally recurring physiological dynamical process. It is divided into two broad stages: rapid eye movement (REM) and non-rapid eye movement (NREM). Normally, sleep proceeds in cycles, each alternating between REM and NREM, with one cycle taking about 90 minutes. A clinically acceptable staging of the sleep is determined by reading the recorded electroencephalography (EEG) based on the R\&K criteria, which were standardized in 1968 by Allan Rechtschaffen and Anthony Kales \cite{RK}. We take the staging according to this criteria as the gold standard, and we demonstrate that the respiration SSTIF can recover the gold standard staging of REM versus non-REM. The result is illustrated in Figure \ref{sleepirr} which shows the
%corresponding
SSTIF of a respiratory signal recorded for about 8 hours with a 16 Hz sampling rate (original signal not shown here), and the corresponding sleep stages, denoted by $S(t)$, determined from a simultaneously recorded EEG according to the R\&K criteria. High correlation is observed when comparing the time intervals with $S(t)=5$ with SSTIF. \footnote{Further study and finding of the sleep cycle is beyond our scope here; a more detailed study will be presented in a later paper.}

The discussion and examples in this Section illustrate the importance of an accurate determination of the ``instantaneous frequency'' of a signal, as captured by our SSTIF notion. In principle, we can determine instantaneous frequencies $\phi'(t)$ from representation of type (1) as well as from (\ref{toy1}) or (\ref{foft2}). We believe (and will argue in the next Section) that, at least for certain signals $f(t)$, modeling $f$ as in (\ref{toy1}) or (\ref{foft2}), i.e., in terms of ``wave shape functions'' rather than with cosines, leads to more accurate estimates of the function $\phi'(t)$ via Synchrosqueezing, and probably also by other methods. For such signals, it is thus important to separate each component of $f$ modeled as in (\ref{toy1}) or (\ref{foft2}) into a ``wave shape function'' on the one hand, and slowly varying amplitude and instantaneous frequency on the other hand.

\begin{figure}[h]
\begin{centering}
\subfigure{
\includegraphics[width=0.95\textwidth]{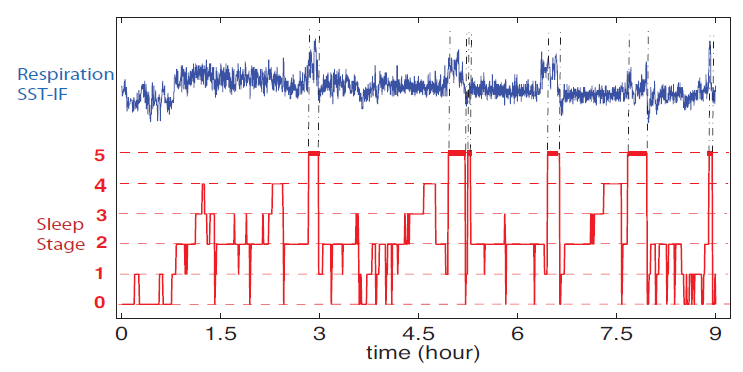}}
\end{centering}
\caption{The blue curve is the $10\times$SSTIF+4, and the red piecewise function is the sleep stage determined from the simultaneously recorded EEG by the R\&K criteria. The REM stage period is emphasized in both signals. }
\label{sleepirr}
\end{figure}

Another reason why it is important to separate ``shape function'' from ``instantaneous frequency'' is that the information hidden in the shape function is important in its own right, and can, for medical signals, be used for clinically quite different diagnoses. In other words, this separation allows us to tease apart two types of information that are commonly mixed-up. For example, reading ECG signals to diagnose cardiac disease in clinical practice amounts to evaluating the morphology of each heart beat, and this is given by the wave shape function. A well known example is the typical ``ST elevation'' in myocardial infarction patients. Another example is the qualitatively different spectral behavior of the morphology of the ECG shape function indicative of myocardial ischemia: the spectral analysis of ECG wave shape functions from dogs revealed a shift from high- to low-frequency ranges in ischemia cases \cite{mor1990}. Similar phenomena have been associated with balloon inflations during percutaneous transcatheter angioplasty in CAD patients \cite{abboud1987}. We expect that a cleaner identification of the SSTIF will also lead to a separation of the shape function particularities with greater sensitivity, which in turn will be useful for clinical diagnoses linked to those particularities. A detailed discussion of this will have to wait for a subsequent paper; here we concentrate on the identification of the instantaneous frequency.

\section{Theorem}\label{theorem}
We start by introducing some notations and conventions. Suppose $f$ is a tempered distribution. The Fourier transforms of $f$ is denoted by $\hat{f}$ and satisfies the normalization $\widehat{e^{-\pi x^{2}}}=e^{-\pi\xi^{2}}$. In the following, we denote $W_f(a,b)$ to be the continuous wavelet transform of $f$ with the mother wavelet $\psi$ which is in the Schwartz space \cite{ID1992}. For simplicity,  we shall assume $\mbox{supp}~\widehat{\psi}\subset[1-\Delta,1+\Delta]$, where $0<\Delta\ll1$; in practice, our results seem to hold under much less stringent conditions.

As is customary, we say that $\tau$ is a
period for the function
$\varphi: \RR \rightarrow \CC$, or that $\varphi$ is $\tau-$periodic, if, for all $t \in \RR$, and all $k \in \ZZ$, $\varphi(t+k\tau)=\varphi(t)$.
We shall designate $T$ to be {\em the} Period for $\varphi$,
if
\[
\forall t \in \RR\, ,
\forall k \in \ZZ\,:\,\varphi(t+kT)=\varphi(t)\, \mbox{ and }
\,
T=\inf\left(\{\, \tau\,;\, \tau >0 \mbox{ and }\tau \mbox{ is a period for } \varphi \,\} \right)\,.
\]

Next, we define a special class of wave shape functions that are dominated by one particular Fourier mode. More precisely,
\begin{defn}[Analytic shape function class $\SSS^{\delta,D,\theta}$]
Fix $\delta\geq 0$, $D\in \NN$ and $\theta\geq 0$. The analytic shape function class $\SSS^{\delta,D,\theta}$ is the subset of $C^{1,\alpha}(\TT)$, where $\TT$ is the $1$-dim torus and $\alpha>1/2$ (the $2\pi-$periodic $\alpha$-Holder continuously differentiable functions), consisting of analytic functions $s$
with mean $0$, i.e. $\hat{s}(k)=0$ for all $k\leq 0$, and unit $L^2-$norm for which all the Fourier modes $\widehat{s}(k)$, $k\neq 1$ are dominated by the product of $\delta$ and the first mode coefficient, i.e.
\begin{equation}\label{definition:shape:1}
\forall k \in \NN, \, \mbox{ with }k\neq 1, \, \left| \widehat{s}(k) \right| \leq \delta \, \left| \widehat{s}(1) \right|
\end{equation}
and 
\begin{equation}\label{definition:shape:2}
\sum_{n>D}|n\widehat{s}(n)|\leq \theta
\end{equation}
%for all $l>D$ and $l<-D$.
\end{defn}

\begin{remark} 
(1) The parameters $\delta$, $D$ and $\theta$ are used to characterize the ``shape'' of the shape function. Condition (\ref{definition:shape:1}) says that the ``base'' frequency $\hat{s}(1)$ can not be zero; we use this condition to estimate the ``instantaneous frequency'' as we should see below. Condition (\ref{definition:shape:2}) says that ``essentially'' the shape does not oscillate too fast. (2) In practice, we shall consider real-valued shape function $\tilde{s}$, which are not analytic. However, we can use the standard trick of viewing them as the real part of an analytic function, i.e., $\tilde{s}=\Re s$, where $\hat{s}(k)=\hat{\tilde{s}}(k)$ if $k\geq 0$, $\hat{s}(k)=0$ if $k<0$. Because the Synchrosqueezing transform (to be defined soon) uses the positive frequency domain only, $\tilde{s}$ and $s$ can be used interchangeably in our analysis. Thus, we can define a \textit{real shape function class} $\tilde{\SSS}^{\delta,D,\theta}$ of parameter $\delta$, $D$, and $\theta$ to contain the functions $\tilde{s}:\TT\to\RR$ so that $\tilde{s}=\Re s$ for some $s\in \mathcal{\SSS}^{\delta,D,\theta}$. Since the analysis of the functions with the real shape functions is the same as that of the analytic shape functions, in the following we focus on analytic shape functions in our analysis, for notational convenience, but when we demonstrate numerical results, we use real shape functions, which is easier for visualization. 
\end{remark}

For example, $e^{i t}$ is a shape function, which is widely used in Fourier analysis. Indeed, $\widehat{e^{i t}}(n)=1$ when $n=1$ and $0$ when $n\neq 1$ so that $e^{i t}\in \SSS^{0,1,0}$. We demonstrate some examples in Figure \ref{shape3}: the shape function $s_{\II}(t)$ for the ECG lead II signal satisfies $s_{\II}(t)\in \SSS^{\delta, D, \theta}$ where $\delta\approx 3.4$, $D\approx 40$ and $\theta\ll1$. (Note that the shape function $s_{\II}(t)$ depends on the subject; different ECG signals may belong to different $\tilde{\SSS}^{\delta,D,\theta}$; the value $\delta=3.4$ is one of the larger we have encountered -- more often $\delta$ can be picked less than $1$.)

%\begin{remark}
%In physical language, the purpose of introducing the notion of the shape function is to capture the simply harmonic standing waves. In the continuum mechanics field, it is commonly assumed that any wave motion can be resolved into simply harmonic standing waves, which can be described by the Sturm-Liouville systems with proper boundary conditions \cite{birkhoff1989}.
%\end{remark}

\begin{figure}[h]
\subfigure{
\includegraphics[width=0.95\textwidth]{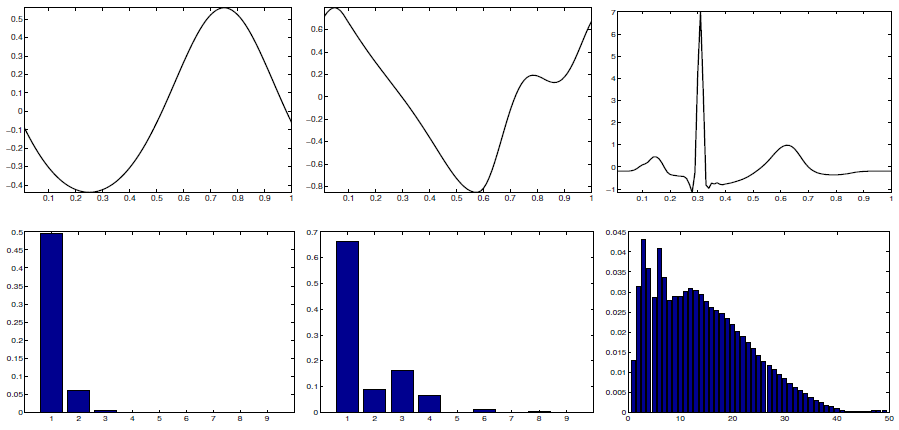}}
\caption{Three different shape functions $s_1(t)$, $s_2(t)$ and $s_{\II}(t)$ (upper) and their spectrum (below). Left:  $s_1(t)$ is equal to normalizing $\left[\cos(0.5\cos(t))-\frac{\sin(t)}{\cos(t)}\sin(0.5\cos(t))\right]\cos(t)$ to be of unit $L^2$ norm; middle: $s_2(t)$ is equal to normalizing (\ref{shapeL1}) to be of unit $L^2$ norm; right: $s_{\II}$ is a shape function for a particular lead II ECG signal.}\label{shape3}
\end{figure}
Next, consider the following class of functions.
\begin{defn}[Intrinsic Mode Functions class $\mathcal{C}^{\delta,D}_{\epsilon}$]
For fixed choices of  $\epsilon,\,\delta,\, D>0$, $\epsilon\ll 1$, the space $\mathcal{C}^{\delta,D}_{\epsilon}$ of \textit{Intrinsic Mode
Functions (IMFs)} consists of functions $f:\RR\to\CC$, $f\in C^1(\RR)\cap L^\infty(\RR)$ having the
form \begin{equation}
f(t)=A(t)s(2\pi \phi(t)),
\end{equation}
where $s\in \SSS^{\delta,D,\epsilon}$, such that $A$ and $\phi$ satisfy the following conditions:
\[A\in C^1(\RR)\cap L^\infty(\RR),~ \phi\in C^2(\RR),\]
\[
\inf_{t\in\RR}A(t)>0,~\inf_{t\in\RR}\phi'(t)>0, ~\sup_{t\in\RR}\phi'(t)<\infty\]
\[
|A'(t)|\leq\epsilon|\phi'(t)|,\quad |\phi''(t)|\leq\epsilon|\phi'(t)|\mbox{ for all }t\in\RR,\]
\[
M''\,:=\,\sup_{t \in \RR}|\phi''(t)|< \infty.
\]
\end{defn}

We then consider the function class $\mathcal{C}^{\delta,D}_{\epsilon,d}$, defined as follows.
\begin{defn}[Superpositions of IMFs]\label{DefBClass}
 The space $\mathcal{C}^{\delta,D}_{\epsilon,d}$ of \textit{superpositions
of IMFs} consists of functions $f$ having the form
\[
f(t)=\sum_{k=1}^{K}f_{k}(t)
\]
for some finite $K>0$ and $f_{k}(t)=A_{k}(t)s_k(2\pi \phi_{k}(t))\in\mathcal{C}^{\delta,D}_{\epsilon}$ such that $\phi_{k}$ satisfy 
\[
\phi'_k(t)>\phi'_{k-1}(t)\mbox{    and    }\phi'_{k}(t)-\phi'_{k-1}(t)\geq d [\phi'_k(t)+\phi'_{k-1}(t)].
\]
\end{defn}

Note that $\mathcal{C}^{\delta,D}_{\epsilon,d}$ and $\SSS^{\delta,D,\theta}$ are not vector spaces and are not the most general possible definition. We use these definitions for the purpose of simplifying the proof and clarifying the main idea of this work. 

In the following we prove that the Synchrosqueezing transform discussed in \cite{Daubechies2010} allows us to estimate the instantaneous frequency of each component of the functions in $\mathcal{C}^{\delta,D}_{\epsilon,d}$ with high accuracy when all the wave shape functions are close to the exponential function, i.e. when $\delta$ is sufficiently small. Moreover, under some constraints on the IF of each component, we are able to reconstruct each component. %We also discuss the case when the function $f$ itself is in $\mathcal{C}^{\delta,D}_{\epsilon}$, i.e. it has only one component; in this case the Synchrosqueezing transform still leads to an accurate estimate of the instantaneous frequency even if the wave shape function is quite different from the exponential $\exp(2\pi i \cdot)$, i.e. even for $\delta$ that is not small. 
Most of the estimates needed to prove these results follow from Section 3 in \cite{Daubechies2010}; we shall spell out the details only when care has to be taken with extra terms involving the possibly more complex shape of the wave shape functions.
\begin{defn}[Instantaneous frequency information function]
Let $f\in L^\infty(\RR)$. The {\em instantaneous frequency information function} of $f$ is defined by
\[
\omega_{f}(a,b)=
\begin{cases} \frac{-i\partial_{b}W_{f}(a,b)}{2\pi W_{f}(a,b)} & |W_{f}(a,b)|>0\\
\infty & |W_{f}(a,b)|=0
\end{cases}.
\]
\end{defn}
Note that this definition makes sense since $W_f(a,b)\in C^\infty(\RR_+\times\RR)$.
\begin{remark}
In practice, the determination of those $(a,b)$-pairs for which $W_f(a,b)=0$ is rather unstable when $f$ has been contaminated by noise. For this reason, it is useful to consider a threshold for $|W_f(a,b)|$, below which $\omega_f(a,b)$ is not defined. The purpose of this function is to record the information of the instantaneous frequency, based on which the reassignment will be performed.
\end{remark}

Then, we consider the following definition of the Synchrosqueezing based on the wavelet transform:
\begin{defn}[Synchrosqueezing transform]
For $f\in L^\infty(\RR)$, the \textit{Synchrosqueezing transform with resolution $\alpha>0$ and threshold $\gamma\geq0$ } is defined by
\begin{equation}\label{ss}
S^{\alpha,\gamma}_f(b,\xi):=\int_{A_{\gamma,f}(b)} W_f(a,b) \frac{1}{\alpha}h\left(\frac{|\xi -\omega_f(a,b)|}{\alpha}\right)a^{-3/2}\ud a,
\end{equation}
where $b\in\RR$, $\xi\in\RR^+$, $A_{\gamma,f}(b):=\{a \in \RR_+;|W_f(a,b)| >\gamma\}$ and $h(t) =\frac{1}{\sqrt{\pi}}e^{-t^2}$.
\end{defn}

%%%%%%%%%%%%%%%%%%%%%%%%%%%%%%%%%%%%%%%%%%%%%%%%%

Note that $\omega_f(a,b)$ and $S^{\alpha,\gamma}_f(b,\xi)$ can be defined for any $L^\infty$ function $f$. However, when $f\in \mathcal{C}^{\delta,D}_{\epsilon,d}$, the next Theorem tells us that the Synchrosqueezing transform provides an accurate estimation of the instantaneous frequency and allows the reconstruction of each component.

%When the IF of the components of $f\in \mathcal{C}^{\delta,D}_{\epsilon,d}$ are separated in the sense mentioned in the next Theorem, we show that Synchrosqueezing transform further allows us to reconstruct each component.
\begin{theorem}\label{mainthm1}
Let $f(t)=\sum_{k=1}^KA_{k}(t)s_k(2\pi \phi_{k}(t))\in \mathcal{C}^{\delta,D}_{\epsilon,d}$. Suppose $\Delta<d/(1+d)$ and
\begin{equation}\label{thm:special1}
\left(\cup_{n=1}^D\left[\frac{1-d}{n\phi'_k(t)},\frac{1+d}{n\phi_k'(t)}\right]\right)\cap\left(\cup_{l<k}\cup_{n=1}^D\left[\frac{1-d}{n\phi'_l(t)},\frac{1+d}{n\phi_l'(t)}\right]\right)=\emptyset
\end{equation}
for all $1\leq k\leq K$ and $t\in\RR$. Set $\mathcal{R}_{\psi}:=\int\overline{\widehat{\psi}(z)}z^{-1}\ud z$, $Z_{k,n}:=\{(a,b):|an\phi'_k(b)-1|<\Delta\}$ and $\weps:=\epsilon^{1/3}$. 
Then, provided $\epsilon$ is sufficiently small, the following hold:
\begin{itemize}
\item $|W_f(a,b)|>\weps$ only when $(a,b) \in Z_{k,n}$ for some $k\in\{1,\ldots,K\}$ and $n\in\{1,\ldots,D\}$.
\item For each pair $(a,b) \in Z_{k,n}$ for which $|W_f(a,b)|>\weps$, we have
    \[
    |\omega_f(a,b)- n\phi_k'(b)|\leq\weps.
    \]
\item Moreover, for each $k \in \{1,\ldots,K\}$ and all $b \in \RR$,  
\[
\left|\lim_{\alpha\rightarrow 0}\left(\mathcal{R}_{\psi}^{-1}\int_{\cup_{n=1}^D\{\xi:\,|\xi-n\phi'_k(b)|<\weps\}}S^{\alpha,\weps}_{f}(b,\xi)\ud\xi\right) -A_k(b)s_k(2\pi \phi_k(b))\right|\leq C\weps,
\]
where $C=A_k(b)\weps^2+4\left[\left(\frac{\phi'_k(b)}{1-\Delta}\right)^{1/2}-\left(\frac{\phi'_k(b)}{1+\Delta}\right)^{1/2}\right]$. 
\end{itemize}
\end{theorem}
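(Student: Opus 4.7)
The plan is to reduce $f\in \mathcal{C}^{\delta,D}_{\epsilon,d}$ to a finite superposition of single-harmonic intrinsic mode functions of the type already analyzed in \cite{Daubechies2010}, by expanding each analytic shape function into its Fourier series. Writing $s_k(2\pi\phi)=\sum_{n\geq 1}\widehat{s}_k(n)\,e^{2\pi i n\phi}$, one obtains
\[
f(t)=\sum_{k=1}^K\sum_{n=1}^D A_k(t)\widehat{s}_k(n)\,e^{2\pi i n\phi_k(t)}\;+\;T(t),
\]
where the tail $T(t):=\sum_{k=1}^K\sum_{n>D}A_k(t)\widehat{s}_k(n)\,e^{2\pi i n\phi_k(t)}$ satisfies $\|T\|_\infty\leq (\max_k\|A_k\|_\infty)\,\epsilon$ by (\ref{definition:shape:2}), since the shape class in our setting has $\theta=\epsilon$. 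Each main summand $A_k(t)\widehat{s}_k(n)\,e^{2\pi i n\phi_k(t)}$ is itself an IMF in the cosine sense of \cite{Daubechies2010}: its amplitude $A_k\widehat{s}_k(n)$ is slowly varying and its instantaneous frequency is $n\phi_k'$, with the slow-variation hypotheses $|A_k'|\leq\epsilon|\phi_k'|$, $|\phi_k''|\leq\epsilon|\phi_k'|$ carrying over with the same $\epsilon$ after rescaling frequency by $n$.

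For the first two bullets I would apply the single-harmonic wavelet estimates of \cite{Daubechies2010} to each main summand, yielding
\[
W_f(a,b)=\sum_{k=1}^K\sum_{n=1}^D A_k(b)\widehat{s}_k(n)\,\overline{\widehat{\psi}(an\phi_k'(b))}\,e^{2\pi i n\phi_k(b)}\;+\;R(a,b),
\]
with an analogous formula for $\partial_b W_f$ that carries an additional factor $2\pi i n\phi_k'(b)$ in front of the $(k,n)$ term, and with $|R(a,b)|=O(\epsilon)$ uniformly. Because $\mbox{supp}\,\widehat{\psi}\subset[1-\Delta,1+\Delta]$, the $(k,n)$ main term vanishes off $Z_{k,n}$; the hypothesis $\Delta<d/(1+d)$ combined with (\ref{thm:special1}) then guarantees $Z_{k,n}\cap Z_{l,m}=\emptyset$ whenever $(k,n)\neq(l,m)$. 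Hence outside $\bigcup_{k,n}Z_{k,n}$ only the remainder survives and $|W_f|\lesssim\epsilon\ll\weps$, giving the first bullet. On $Z_{k,n}\cap\{|W_f|>\weps\}$ only one main term is non-negligible, so forming $\omega_f=-i\partial_b W_f/(2\pi W_f)$ the factors $A_k(b)\widehat{s}_k(n)\,\overline{\widehat{\psi}(an\phi_k'(b))}\,e^{2\pi i n\phi_k(b)}$ cancel, leaving $n\phi_k'(b)$ with residual error $O(\epsilon/\weps)=O(\epsilon^{2/3})\leq\weps$.

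The reconstruction statement then follows by substituting these pointwise bounds into the Synchrosqueezing integral (\ref{ss}). As $\alpha\to 0$ the kernel $\alpha^{-1}h(\cdot/\alpha)$ concentrates to a Dirac mass, so the $\xi$-integral over $|\xi-n\phi_k'(b)|<\weps$ selects precisely those scales for which $\omega_f(a,b)$ falls in that band; by the second bullet this amounts to $a$ lying in $Z_{k,n}(b)\cap\{a:|W_f(a,b)|>\weps\}$. This yields
\[
\lim_{\alpha\to 0}\int_{|\xi-n\phi_k'(b)|<\weps}S^{\alpha,\weps}_f(b,\xi)\,\ud\xi=\int_{Z_{k,n}(b)}W_f(a,b)\,a^{-3/2}\,\ud a\;+\;O(\weps^3).
\]
Substituting the main-term expression and changing variables $u=an\phi_k'(b)$ reproduces $\mathcal{R}_\psi\,A_k(b)\widehat{s}_k(n)\,e^{2\pi i n\phi_k(b)}$ up to a boundary correction $4\bigl[(\phi_k'(b)/(1-\Delta))^{1/2}-(\phi_k'(b)/(1+\Delta))^{1/2}\bigr]$ that arises from truncating the $a$-integral to $Z_{k,n}$. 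Summing over $n=1,\ldots,D$ and using (\ref{definition:shape:2}) to bound the neglected $n>D$ harmonics by $\|A_k\|_\infty\,\epsilon$ reassembles $A_k(b)s_k(2\pi\phi_k(b))$ with precisely the stated error constant $C$.

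The main obstacle I anticipate is the simultaneous bookkeeping of three compounded error sources — the slow-variation remainder $O(\epsilon)$ inherited from \cite{Daubechies2010} for each of the (up to) $KD$ main harmonics, the shape tail from $n>D$ bounded by (\ref{definition:shape:2}), and the boundary term produced by truncating the scale integral to $Z_{k,n}$ — and calibrating the threshold $\weps$ so that it dominates $\epsilon/\weps$ while remaining small, which is what dictates the choice $\weps=\epsilon^{1/3}$. A secondary but important subtlety is verifying the uniform-in-$b$ disjointness of the sets $\{Z_{k,n}\}$ directly from the arithmetic of (\ref{thm:special1}) and $\Delta<d/(1+d)$, since the separation between the scales $1/(n\phi_k'(b))$ of harmonics of different components must not be swallowed by the width $\Delta$ of $\mbox{supp}\,\widehat{\psi}$.
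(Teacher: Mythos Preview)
Your proposal is correct and follows essentially the same route as the paper: Fourier-expand each $s_k$, isolate the $KD$ principal harmonics as single-harmonic IMFs, bound the $n>D$ tail via the shape-class condition $\sum_{n>D}|n\widehat{s}(n)|\leq\epsilon$, use $\Delta<d/(1+d)$ together with (\ref{thm:special1}) to obtain pairwise disjointness of the $Z_{k,n}$, and then run the \cite{Daubechies2010} estimates for $W_f$, $\partial_b W_f$, $\omega_f$ and the reconstruction integral. The only cosmetic discrepancies are that the paper re-derives the wavelet estimates directly (Lemmas~\ref{est1}--\ref{est2}) rather than invoking \cite{Daubechies2010} as a black box on each harmonic, and that your $O(\weps^3)$ in the reconstruction line should be $O(\weps)$, with the $4\bigl[(\phi_k'/(1-\Delta))^{1/2}-(\phi_k'/(1+\Delta))^{1/2}\bigr]$ term arising not from the change of variables but from the excised scales where $|W_f|\leq\weps$.
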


Notice that the special case when $K=1$ is important since the ECG signal and the respiration signal are both signals of this kind. The results of estimating the instantaneous frequency via Synchrosqueezing transform have been shown in Section 2.

When the wave shape functions of all the components of $f(t)$ are close to the imaginary exponential function, the following corollary immediately follows by taking $D=1$ in Theorem \ref{mainthm1}.
\begin{cor}\label{maincor}
Let $f(t)=\sum_{k=1}^KA_{k}(t)s_k(2\pi \phi_{k}(t))\in \mathcal{C}^{\delta,1}_{\epsilon,d}$. Suppose $\Delta<d/(1+d)$, and $\delta\leq\epsilon$. We use the same notations $\mathcal{R}_{\psi}$, $Z_{k,n}$ and $\weps$ as in Theorem \ref{mainthm1}. Then, provided $\epsilon$ is sufficiently small, the following hold:
\begin{itemize}
\item $|W_f(a,b)|>\weps$ only when $(a,b) \in Z_{k,1}$ for some $k\in\{1,\ldots,K\}$.
\item For each pair $(a,b) \in Z_{k,1}$ for which $|W_f(a,b)|>\weps$, we have
    \[
    |\omega_f(a,b)- \phi_k'(b)|\leq\weps.
    \]
\item For each $k \in \{1,\ldots,K\}$ and all $b \in \RR$, 
\[
\left|\lim_{\alpha\rightarrow 0}\left(\mathcal{R}_{\psi}^{-1}\int_{\{\xi:\,|\xi-\phi'_k(b)|<\weps\}}S^{\alpha,\weps}_{f}(b,\xi)\ud\xi\right) - A_k(b)s_k(2\pi\phi_k(b))\right|\leq C\weps,
\]
where $C=A_k(b)\weps^2+4\left[\left(\frac{\phi'_k(b)}{1-\Delta}\right)^{1/2}-\left(\frac{\phi'_k(b)}{1+\Delta}\right)^{1/2}\right]$. 
\end{itemize}
\end{cor}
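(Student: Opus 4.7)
My plan is to deduce Corollary \ref{maincor} directly from Theorem \ref{mainthm1} by specializing to $D=1$, and to sketch how I would prove the underlying theorem (since the corollary is vacuous without it). With $D=1$, condition (\ref{thm:special1}) reduces to pairwise disjointness of the intervals $[\tfrac{1-d}{\phi'_k(t)},\tfrac{1+d}{\phi'_k(t)}]$, which is automatic from Definition \ref{DefBClass}: iterating $\phi'_{k}-\phi'_{k-1}\geq d(\phi'_k+\phi'_{k-1})$ gives $(1-d)\phi'_k\geq(1+d)\phi'_{k-1}$, and hence $\tfrac{1+d}{\phi'_k}\leq\tfrac{1-d}{\phi'_l}$ for every $l<k$. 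The added hypothesis $\delta\leq\epsilon$ does not enter the deduction itself, but reflects that when only one harmonic band is allowed ($D=1$), the shape $s_k\in\SSS^{\delta,1,\epsilon}$ must be $O(\epsilon)$-close to the pure exponential in Fourier sense. The three bullets of the corollary then specialize immediately, with the index $n$ fixed to $1$ and the constant $C$ unchanged.

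To prove the underlying theorem, I would use the Fourier expansion $s_k(2\pi\phi)=\sum_{n\geq 1}\hat{s}_k(n)e^{2\pi i n\phi}$ (valid since each $s_k$ is analytic, mean-zero, and supported on positive modes) to write $f$ as a double sum of pure complex IMFs $\hat{s}_k(n)A_k(t)e^{2\pi i n\phi_k(t)}$. Each such summand has amplitude $A_k$ and phase $n\phi_k$, and the crucial bounds $|A'_k|/|n\phi'_k|\leq\epsilon/n$ and $|(n\phi_k)''|/|n\phi'_k|=|\phi''_k|/|\phi'_k|\leq\epsilon$ hold uniformly in $n$. This allows the pointwise stationary-phase estimates of Section~3 of \cite{Daubechies2010} to be invoked term by term; combined with $\mathrm{supp}\,\widehat{\psi}\subset[1-\Delta,1+\Delta]$, the wavelet transform of the $(k,n)$-term is essentially concentrated on the strip $Z_{k,n}$. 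The tail $n>D$ is absorbed into an $O(\epsilon)$ error by (\ref{definition:shape:2}), while (\ref{thm:special1}) and $\Delta<d/(1+d)$ guarantee pairwise disjointness of the strips $Z_{k,n}$ for $k\leq K$, $n\leq D$. On each such strip,
\[
W_f(a,b)=\hat{s}_k(n)A_k(b)\overline{\widehat{\psi}(an\phi'_k(b))}e^{2\pi i n\phi_k(b)}+O(\epsilon),
\]
with an analogous expression for $\partial_b W_f$ carrying an extra factor $2\pi i n\phi'_k(b)$. Outside the union of strips $|W_f|=O(\epsilon)\ll\weps=\epsilon^{1/3}$, which gives the first bullet; dividing on $\{|W_f|>\weps\}$ yields $|\omega_f-n\phi'_k(b)|=O(\epsilon/\weps)=O(\weps^2)\leq\weps$, which gives the second.

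For the reconstruction, a routine Fubini argument using the normalization of $h$ shows
\[
\lim_{\alpha\to 0}\int_B S^{\alpha,\weps}_f(b,\xi)\,\ud\xi=\int_{\{a:\,\omega_f(a,b)\in B\}\cap A_{\weps,f}(b)}W_f(a,b)\,a^{-3/2}\,\ud a
\]
for any open set $B$. Taking $B=\bigcup_{n=1}^D\{|\xi-n\phi'_k(b)|<\weps\}$, the first two bullets restrict the $a$-domain to $\bigsqcup_{n=1}^D Z_{k,n}(b)\cap\{|W_f|>\weps\}$; substituting the wavelet approximation above and changing variables to $z=an\phi'_k(b)$ collapses each summand, via the identity $\mathcal{R}_\psi=\int\overline{\widehat{\psi}(z)}z^{-1}\,\ud z$, into $\mathcal{R}_\psi\hat{s}_k(n)A_k(b)e^{2\pi i n\phi_k(b)}$. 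Summing over $n\leq D$ and extending the Fourier sum to all $n\geq 1$ at cost $O(\epsilon)$ (via (\ref{definition:shape:2})) reconstructs $A_k(b)s_k(2\pi\phi_k(b))$ up to $C\weps$; the $A_k(b)\weps^2$ piece of $C$ collects the wavelet-level and tail errors, while the $4[(\phi'_k(b)/(1-\Delta))^{1/2}-(\phi'_k(b)/(1+\Delta))^{1/2}]$ piece quantifies the integration loss on the discarded region $\{|W_f|\leq\weps\}$. The delicate point throughout is the threshold $\weps=\epsilon^{1/3}$: this is the unique scaling at which the $O(\epsilon)$ errors in numerator and denominator of $\omega_f$, and the $O(\weps)$ discarded region in the reconstruction, all reconcile with the final $O(\weps)$ bound. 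Careful bookkeeping of the contributions from amplitude/phase variation, the Fourier tail $n>D$, and wavelet bleedthrough across adjacent $Z_{k,n}$ (which $\Delta<d/(1+d)$ keeps at $O(\epsilon)$ rather than $O(1)$) is what makes this balance work, and is the main obstacle.
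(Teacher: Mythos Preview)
Your proposal is correct and matches the paper's approach: the corollary is obtained precisely by setting $D=1$ in Theorem~\ref{mainthm1}, and you correctly supply the one detail the paper leaves implicit, namely that condition~(\ref{thm:special1}) is automatic from Definition~\ref{DefBClass} when $D=1$ (via $(1-d)\phi'_k\geq(1+d)\phi'_{k-1}$ and iteration). Your sketch of the underlying theorem also tracks the paper's Lemmas~\ref{est00}--\ref{est4} closely: Fourier-expand the shape functions, localize each harmonic to $Z_{k,n}$ via the support of $\widehat{\psi}$, control the tail $n>D$ by~(\ref{definition:shape:2}), and balance numerator/denominator errors in $\omega_f$ at the threshold $\weps=\epsilon^{1/3}$; the reconstruction argument and the two pieces of $C$ are identified just as in Lemma~\ref{est4}.
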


Notice that in Corollary \ref{maincor}, the wave shape functions of all components are close to $e^{it}$ while the separation of the ``base frequencies'', that is, $\phi_k'(t)$, are not further limited; on the other hand, the wave shape functions of each component in Theorem \ref{mainthm1} is more general than the cosine function but the behaviors of $\phi_k'(t)$ need to be further limited by (\ref{thm:special1}). Setting $s_k(t)=e^{it}$ for all $k=1,\ldots,K$, we fully recover Theorem 3.3 from \cite{Daubechies2010}.

%\begin{cor}\label{mainthmqq}
%Let $f(t)=A(t)s(2\pi\phi(t))\in \mathcal{C}^\delta_{\epsilon}$, $\delta>0$. Set $\mathcal{R}_{\psi}=\sqrt{2\pi}\int\widehat{\psi}(z)z^{-1}\ud z$. Denote $Z:=\{(a,b)~:~|a\phi'(b)-1|<\Delta\}$. Then, provided $\epsilon$ is sufficiently small, the following hold:
%\begin{itemize}
%\item $|W_f(a,b)|>\weps $ only when $(a,b) \in Z$.
%\item For each pair $(a,b) \in Z$ for which $|W_f(a,b)|>\weps $, we have
%    \[
%    |\omega_f(a,b)- \phi'(b)|\leq\weps .
%    \]
%\item Moreover, for all $b \in \RR$,
%\[
%\left|\lim_{\alpha\rightarrow 0}\left(\mathcal{R}_{\psi}^{-1}\int_{|\xi-\phi'(b)|<\gamma}S^{\alpha,\gamma}_{f}(b,\xi)\ud\xi\right) -A(b)\hat{s}(1)e^{i2\pi\phi(b)}\right|\leq C\weps ,
%\]
%where $C=4\left[\left(\frac{\phi'(b)}{1-\Delta}\right)^{1/2}-\left(\frac{\phi'(b)}{1+\Delta}\right)^{1/2}\right]$. \end{itemize}
%\end{cor}

%We mention that in practice, we expect $\delta$ to be close to $1$. For example, the wave shape function $s_{\II}$ of the ECG signal in Figure \ref{shape3} is in $\SSS_{\dID}^{\delta}$, with $\delta=3.4$.

Condition (\ref{thm:special1}) can be slightly relaxed as follows. Suppose  $f(t)=\sum_{k=1}^KA_{k}(t)s_k(2\pi \phi_{k}(t))\in \mathcal{C}^{\delta,D}_{\epsilon,d}$, where $s_k\in \SSS^{\delta,D_k,\epsilon}$ and $D_k\leq D$ maybe different. Then the following condition suffices for the proof:
\begin{equation}\label{thm:special2}
\left(\cup_{n=1}^{D_k}\left[\frac{1-d}{n\phi'_k(t)},\frac{1+d}{n\phi_k'(t)}\right]\right)\cap\left(\cup_{l<k}\cup_{n=1}^{D_l}\left[\frac{1-d}{n\phi'_l(t)},\frac{1+d}{n\phi_l'(t)}\right]\right)=\emptyset.
\end{equation}
Since the proof is entirely analogous to what we show below, involving longer computations without making a conceptual difference, we focus here on the condition (\ref{thm:special1}) only.

The proof of the Theorem is similar to the one carried out in \cite[Theorem 3.3]{Daubechies2010}, except for some estimates related to the wave shape function, which are given in detail below. In the statement and proof of all the Lemmas, we shall always assume that the conditions of Theorem \ref{mainthm1} are satisfied without repeating them, unless stated otherwise.

First of all, we have the following estimates bounding the growth of $A_k(t)$ and $\phi'_k(t)$ in terms of the value of $\phi_k'(t)$. The proof is the same as that of Estimate 3.4 in \cite{Daubechies2010}, so we omit it.
\begin{lemma}\label{est00}
For each $k \in \{1,\ldots,K\}$, we have
\[
|A_k(t+u)\,-\,A_k(t)| \,\leq \,
\epsilon\,|u|\,\left(\,|\phi'_k(t)|\,+\,\frac{1}{2}\,M''_k\,|u|
  \,\right)
\]
\[ ~~ \mbox{ and } ~~ |\phi'_k(t+u)\,-\,\phi'_k(t)| \,\leq
\,\epsilon\,|u|\,\left(\,|\phi'_k(t)|\,+\,\frac{1}{2}\,M''_k\,|u|
  \,\right) ~.
\]
\label{est0}
\end{lemma}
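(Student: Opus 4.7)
The plan is to prove both inequalities by the fundamental theorem of calculus, using the three structural properties available from $f_k\in\mathcal{C}^{\delta,D}_\epsilon$: the pointwise bounds $|A'_k|\leq\epsilon|\phi'_k|$ and $|\phi''_k|\leq\epsilon|\phi'_k|$, together with the uniform bound $|\phi''_k|\leq M''_k$. The only auxiliary observation needed is a crude linear control of $\phi'_k$ near $t$, namely $|\phi'_k(t+v)|\leq|\phi'_k(t)|+M''_k|v|$ for all $v\in\RR$, which follows at once by writing $\phi'_k(t+v)=\phi'_k(t)+\int_0^v\phi''_k(t+w)\,\ud w$ and taking absolute values under the integral.

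I would handle the estimate for $\phi'_k$ first. Writing the difference as $\phi'_k(t+u)-\phi'_k(t)=\int_0^u\phi''_k(t+v)\,\ud v$, I use the assumption $|\phi''_k(t+v)|\leq\epsilon|\phi'_k(t+v)|$ to produce the factor $\epsilon$, then substitute the linear bound on $|\phi'_k(t+v)|$ from above, so the integrand is at most $\epsilon(|\phi'_k(t)|+M''_k v)$. Integrating from $0$ to $|u|$ yields exactly $\epsilon|u|(|\phi'_k(t)|+\tfrac12 M''_k|u|)$. The amplitude estimate follows by the identical device applied to $A_k(t+u)-A_k(t)=\int_0^u A'_k(t+v)\,\ud v$, invoking $|A'_k|\leq\epsilon|\phi'_k|$ to generate the $\epsilon$ and then the same linear pointwise bound on $|\phi'_k(t+v)|$ to absorb the $v$-dependence. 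The case $u<0$ reduces to $u>0$ by the substitution $v\mapsto -v$ in the integral, which is why only $|u|$ appears in the final estimates.

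There is no real obstacle here; everything is routine one-variable calculus, and this is why the paper omits the proof. The one conceptually nontrivial point worth flagging is that the hypotheses on $\phi''_k$ are used in two distinct roles: the bound $|\phi''_k|\leq\epsilon|\phi'_k|$ supplies the small prefactor $\epsilon$ in the conclusion, whereas the coarser uniform bound $|\phi''_k|\leq M''_k$ is used only to control how far $\phi'_k(t+v)$ can drift from $\phi'_k(t)$ across the interval of integration. This split is what produces an estimate that is simultaneously sharp in the small-$u$ regime (where the $\epsilon|\phi'_k(t)|\,|u|$ term dominates) and nontrivial in the larger-$u$ regime (where the $\tfrac12\epsilon M''_k|u|^2$ term takes over).
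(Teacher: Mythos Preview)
Your argument is correct and is exactly the standard proof: the paper itself omits the details, simply citing Estimate~3.4 in \cite{Daubechies2010}, where the same fundamental-theorem-of-calculus computation (using $|A'_k|\leq\epsilon|\phi'_k|$, $|\phi''_k|\leq\epsilon|\phi'_k|$, and the auxiliary bound $|\phi'_k(t+v)|\leq|\phi'_k(t)|+M''_k|v|$) is carried out. Nothing further is needed.
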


The next lemma concerns the ``dyadic separation'' condition in the definition of $\mathcal{C}^{\delta,D}_{\epsilon,d}$. It implies that for every $(a,b)$-pair, at most one component of the signal ``comes into play''. 

\begin{lemma}\label{est0}
For any pair $(a,b)$ such that $a>\frac{1-\Delta}{D\phi_K'(b)}$, there can be at most one $l \in \{1,\ldots,K\}$ and $n\in\{1,\ldots,D\}$ for which $|an\phi'_l(b)-1| <\Delta$.
\end{lemma}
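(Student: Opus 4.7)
My plan is to argue by contradiction, following the template of the corresponding step in \cite{Daubechies2010} but taking care of the new multi-harmonic structure. Suppose two distinct pairs $(l_1,n_1)\neq(l_2,n_2)$ both satisfy $|an_i\phi'_{l_i}(b)-1|<\Delta$, equivalently
\[
a\in I_{l_i,n_i}\,:=\,\left[\tfrac{1-\Delta}{n_i\phi'_{l_i}(b)},\tfrac{1+\Delta}{n_i\phi'_{l_i}(b)}\right],\qquad i=1,2.
\]
The opening move is the quantitative comparison $\Delta<d/(1+d)<d$, which lets me enlarge each $I_{l_i,n_i}$ to
\[
J_{l_i,n_i}\,:=\,\left[\tfrac{1-d}{n_i\phi'_{l_i}(b)},\tfrac{1+d}{n_i\phi'_{l_i}(b)}\right],
\]
i.e.\ into one of the $D$ intervals that appear in the separation hypothesis (\ref{thm:special1}). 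In particular $a\in J_{l_1,n_1}\cap J_{l_2,n_2}$.

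Case 1 ($l_1\neq l_2$, WLOG $l_1<l_2$): the inclusions $J_{l_i,n_i}\subset\bigcup_{n=1}^D J_{l_i,n}$ put $a$ simultaneously in the ``$l_1$-band'' and the ``$l_2$-band''. Applying (\ref{thm:special1}) at time $t=b$ with $k=l_2$ yields that these two bands are disjoint, a contradiction. This is essentially a direct rewrite of condition (\ref{thm:special1}) and is the main use of the theorem's separation hypothesis.

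Case 2 ($l_1=l_2=l$ with $n_1\neq n_2$): I apply the triangle inequality to $|an_1\phi'_l(b)-1|$ and $|1-an_2\phi'_l(b)|$ to get $a\phi'_l(b)|n_1-n_2|<2\Delta$. Combining with the standing hypothesis $a>(1-\Delta)/(D\phi'_K(b))$ and using $\phi'_l(b)\leq\phi'_K(b)$ (the $\phi'_k$ are increasing in $k$ by Definition \ref{DefBClass}) I obtain
\[
\tfrac{(1-\Delta)|n_1-n_2|}{D}\,<\,2\Delta,
\]
so $(2D+1)\Delta>|n_1-n_2|\geq 1$, i.e.\ $\Delta>1/(2D+1)$. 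Since the overall regime is $\Delta\ll 1$ and $D$ is fixed, for $\Delta$ sufficiently small (which is consistent with the theorem's smallness hypothesis) this is the desired contradiction.

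The main obstacle is precisely Case 2, which is new relative to \cite{Daubechies2010}: there the shape function is a pure exponential ($D=1$), so $n_1=n_2=1$ is forced and Case 2 is vacuous. Here the wave-shape function carries harmonics at $n\phi'_l(b)$ for $n=1,\ldots,D$, and the lower bound $a>(1-\Delta)/(D\phi'_K(b))$ built into the lemma's hypothesis is exactly what converts the triangle-inequality estimate into a clean contradiction with $n_1\neq n_2$. Case 1, by contrast, is just condition (\ref{thm:special1}) reformulated after the $\Delta\to d$ widening.
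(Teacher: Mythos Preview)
Your Case~1 is fine and is essentially the paper's argument specialized to the cross-component situation.

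Your Case~2 has a genuine error in the justification. From $a>(1-\Delta)/(D\phi'_K(b))$ together with $\phi'_l(b)\le\phi'_K(b)$ you only get
\[
a\phi'_l(b)\;>\;\frac{(1-\Delta)\,\phi'_l(b)}{D\,\phi'_K(b)},
\]
and since $\phi'_l(b)/\phi'_K(b)\le 1$ this is \emph{not} bounded below by $(1-\Delta)/D$; the monotonicity goes the wrong way for your purpose. The inequality you actually need, $a\phi'_l(b)>(1-\Delta)/D$, follows instead directly from $|an_1\phi'_l(b)-1|<\Delta$ and $n_1\le D$, without invoking the lower bound on $a$ at all. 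With that repair the triangle-inequality computation does yield $(1-\Delta)|n_1-n_2|<2D\Delta$, but you are then left appealing to an extra smallness condition $\Delta<1/(2D+1)$ that is not among the theorem's stated hypotheses.

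The paper avoids the case split altogether. It asserts that the $d$-intervals $\bigl[\frac{1-d}{n\phi'_l(b)},\frac{1+d}{n\phi'_l(b)}\bigr]$ are pairwise disjoint for \emph{all} pairs $(l,n)\neq(j,m)$ with $n,m\le D$, reindexes the $DK$ harmonic frequencies $\{n\phi'_k(b)\}$ in increasing order as $\psi'_1<\cdots<\psi'_{DK}$, and reads off the uniform separation $\psi'_p-\psi'_q\ge d(\psi'_p+\psi'_q)$. If two of the $\psi'$ simultaneously satisfy $|a\psi'-1|<\Delta$, adding and subtracting gives $\psi'_p-\psi'_q\le 2\Delta/a$ and $\psi'_p+\psi'_q\ge 2(1-\Delta)/a$, hence $\Delta\ge d(1-\Delta)$, contradicting $\Delta<d/(1+d)$ directly. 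Thus the paper handles the same-$l$ and different-$l$ situations by a single separation inequality and contradicts only the stated hypothesis on $\Delta$, rather than introducing a separate $\Delta<1/(2D+1)$ condition.
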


\begin{proof}
Suppose (\ref{thm:special1}) holds. We rewrite $f(t)$ as
\begin{equation}\label{est0:eq0}
\sum_{k=1}^KA_{k}(t)s_k(2\pi \phi_{k}(t)) = \sum_{k=1}^K\sum_{n=1}^D A_{k}(t)\hat{s}_k(n)e^{i2\pi n\phi_{k}(t)} + \sum_{k=1}^K\sum_{n=D+1}^\infty A_{k}(t)\hat{s}_k(n)e^{i2\pi n\phi_{k}(t)},
\end{equation}
where the pointwise convergence holds since $s_k\in C^{1,\alpha}$. By condition (\ref{thm:special1}), we know for all $t\in\RR$
\begin{equation}\label{est0:eq1}
\left[\frac{1-d}{n\phi'_l(t)},\frac{1+d}{n\phi'_l(t)}\right]\cap\left[\frac{1-d}{m\phi'_{j}(t)},\frac{1+d}{m\phi'_{j}(t)}\right]=\emptyset
\end{equation}
when $l\neq j$ or $n\neq m$, where $n,m\leq D$.  
Next we permute $D\times K$ components $\{n\phi'_k(b)\}_{n=1,\ldots,D,\, k=1,\ldots,K}$ in the ascending way and index $\phi'_1(b)$ by $1$ and $D\phi'_K(b)$ by $D\times K$. Thus (\ref{est0:eq0}) can be further rewritten as
\[
\sum_{l=1}^{D\times K} \tilde{A}_{l}(t)e^{i2\pi \psi_{l}(t)} + \sum_{k=1}^K\sum_{n=D+1}^\infty A_{k}(t)\hat{s}_k(n)e^{i2\pi n\phi_{k}(t)},
\]
where $\tilde{A}_1(t)=\hat{s}_1(1)A_1(t),\ldots,\tilde{A}_{D\times K}(t)=\hat{s}_K(D)A_K(t)$ and $\psi_1(t)=\phi_1(t),\ldots,\psi_{D\times K}(t)=D\phi_K(t)$. Take $j,l\in\{1,\ldots,D\times K\}$ so that $l>j$ and fix $a>\frac{1-\Delta}{D\phi_K'(b)}=\frac{1-\Delta}{\psi_{D\times K}'(b)}$. By (\ref{est0:eq1}) we have
\begin{equation}\label{est0:eq2}
\psi'_l(b)-\psi'_{j}(b)  \geq d [\psi'_j(b)+\psi'_{l}(b)] %> \Delta[\psi'_j(b)+\psi'_{j'}(b)]
\end{equation}
since $\frac{1+d}{\psi'_{l}(b)}<\frac{1-d}{\psi'_{j}(b)}$. % and the assumption $\Delta<d/(1+d)<d$.
Now suppose %there exist $j, j' \in \{1,\ldots,DK\}$, where $j>j'$, so that 
$| a \psi'_j(b) - 1 |  < \Delta$ and $| a \psi'_{l}(b) - 1 |  < \Delta$ hold simultaneously. A direct calculation leads to 
\[
  \psi'_l(b)-\psi'_{j}(b)  \leq  \frac{ (1+\Delta) - (1-\Delta) }{a}  = \frac{2 \Delta}{a}
\]
and
\[
  \psi'_l(b)+\psi'_{j}(b)  \geq  \frac{ (1-\Delta) + (1-\Delta) }{a}  = \frac{2 (1-\Delta)}{a},
\]
which with (\ref{est0:eq2}) gives
\[
 \Delta  \geq  d (1-\Delta)~,
\]
contradicting the condition $\Delta < \frac{d}{1+d}$.
\end{proof}

{\allowdisplaybreaks % to the end of the proof

The following Lemma describes the result of applying the continuous wavelet transform to $f\in \mathcal{C}^{\delta,D}_{\epsilon,d}$. It is similar to Estimate 3.5 in \cite{Daubechies2010} except for some extra terms due to the more general form of the wave shape functions; when $s_k(t)=e^{i t}$ for all $k\in\{1,\ldots,K\}$, we recover the statement and proof of Estimate 3.5 in \cite{Daubechies2010}.
\begin{lemma}\label{est1}
For $l\in\{1,\ldots,K\}$, $n\in\{1,\ldots,D\}$ and $(a,b)\in Z_{l,n}$, we have
\begin{equation}\label{est1:rslt1}
\left|W_f(a,b)-A_l(b)\hat{s}_l(n)e^{i2\pi n\phi_l(b)}\sqrt{a}\overline{\widehat{\psi}\left(an\phi'_l(b)\right)}\right|\leq
\epsilon a^{1/2}\Lambda_1(a,b).
\end{equation}
where
\begin{equation}
\Lambda_1(a,b)=\sum_{k=1}^K \Big\{\|s_k\|_\infty \left(\phi_k'(b)aI_1+\frac{M''_k}{2} a^{2}I_2\right)+\pi A_k(b) \left[\sum_{j\in\ZZ} |j|\left|\hat{s}_k(j)\right|\right] \left(a^{2}I_2|\phi'_k(b)|+\frac{M_k''}{3}a^{3}I_3\right)\Big\}+ I_0\sum_{k=1}^{l-1} A_k(b) \nonumber
\end{equation}
and $I_i=\int_\RR |x|^i|\psi(x)|\ud x$.
\end{lemma}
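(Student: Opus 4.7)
The plan is to expand $W_f(a,b)$ componentwise around the base point $b$, replacing each $A_k(b+au)$ and $\phi_k(b+au)$ by their leading Taylor approximations at $t=b$, and then to split the resulting frozen-coefficient integral according to whether the Fourier modes of each wave shape function $s_k$ are low ($j\leq D$) or high ($j>D$). By linearity, $W_f(a,b) = \sum_{k=1}^{K} W_{A_k s_k(2\pi\phi_k)}(a,b)$, and changing variables $u = (t-b)/a$ gives
\[
W_{A_k s_k(2\pi\phi_k)}(a,b) = \sqrt{a}\int A_k(b+au)\,s_k\bigl(2\pi\phi_k(b+au)\bigr)\,\overline{\psi(u)}\,\ud u.
\]
Replacing $A_k(b+au)$ by $A_k(b)$ and using the amplitude inequality of Lemma~\ref{est00} together with $|s_k|\leq\|s_k\|_\infty$ produces a discrepancy bounded by $\epsilon\sqrt{a}\,\|s_k\|_\infty\bigl(\phi'_k(b)a I_1 + \tfrac{1}{2}M''_k a^2 I_2\bigr)$, which is the first summand of $\Lambda_1$. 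Replacing $\phi_k(b+au)$ by its linearisation $\phi_k(b)+au\phi'_k(b)$, the identity $\phi_k(b+au)-\phi_k(b)-au\phi'_k(b) = \int_0^{au}[\phi'_k(b+v)-\phi'_k(b)]\,\ud v$ together with the derivative inequality of Lemma~\ref{est00} yields a remainder of size $\epsilon\bigl(\tfrac{(au)^2}{2}\phi'_k(b) + \tfrac{M''_k}{6}(au)^3\bigr)$. Multiplying this remainder by $2\pi$ and by the Lipschitz constant $\sum_{j\in\ZZ}|j||\widehat{s}_k(j)|$ of $s_k$, and integrating against $|\psi(u)|$, yields the second summand of $\Lambda_1$.

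What remains is the frozen-coefficient integral $\sqrt{a}\,A_k(b)\int s_k\bigl(2\pi\phi_k(b)+2\pi au\phi'_k(b)\bigr)\overline{\psi(u)}\,\ud u$, which upon expanding $s_k$ in its Fourier series equals $\sqrt{a}\,A_k(b)\sum_{j\geq 1}\widehat{s}_k(j)\,e^{i2\pi j\phi_k(b)}\,\overline{\widehat{\psi}(aj\phi'_k(b))}$. Split this sum at $j=D$. For $j\leq D$, the support hypothesis $\textup{supp}\,\widehat{\psi}\subset[1-\Delta,1+\Delta]$ combined with Lemma~\ref{est0} forces $\widehat{\psi}(aj\phi'_k(b))=0$ for every pair $(k,j)\neq(l,n)$, isolating exactly the target term $A_l(b)\widehat{s}_l(n)e^{i2\pi n\phi_l(b)}\sqrt{a}\,\overline{\widehat{\psi}(an\phi'_l(b))}$. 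For $j>D$, the defining bound $\sum_{j>D}|\widehat{s}_k(j)|\leq\sum_{j>D}|j||\widehat{s}_k(j)|\leq\epsilon$ of the class $\SSS^{\delta,D,\epsilon}$, combined with $\|\widehat{\psi}\|_\infty\leq I_0$, controls the high-mode contribution by $\epsilon\sqrt{a}\,I_0\,A_k(b)$ per component; for indices $k\geq l$ this residue is absorbed by exploiting the ordering $aj\phi'_k(b)\geq (1-\Delta)j/n$, which for $j>D$ pushes the argument of $\widehat\psi$ out of its support and folds the remaining estimate into the already-present Lipschitz term, leaving only $I_0\sum_{k=1}^{l-1}A_k(b)$ in $\Lambda_1$.

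The main obstacle is the combinatorial bookkeeping: arranging these three classes of error (amplitude Taylor, phase Taylor, high-mode tail) with the correct universal constants $1/2$, $\pi$, $1/3$ and with the correct index ranges so that they assemble exactly into $\Lambda_1(a,b)$, and verifying that the high-mode tail residue truly localises to $k<l$. Once this bookkeeping is in place, the analytic content beyond Lemmas~\ref{est00} and~\ref{est0} is routine: one uses only Fourier inversion, the support of $\widehat{\psi}$, and the summability built into the definition of $\SSS^{\delta,D,\theta}$.
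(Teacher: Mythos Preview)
Your proposal is correct and follows the paper's proof essentially step for step: freeze the amplitude (Lemma~\ref{est00} gives the $\|s_k\|_\infty$ term), linearise the phase (Lemma~\ref{est00} plus the Fourier-series Lipschitz bound gives the $\pi A_k(b)\sum_j|j||\widehat{s}_k(j)|$ term), evaluate the frozen integral as $\sqrt{a}\,A_k(b)\sum_j\widehat{s}_k(j)e^{i2\pi j\phi_k(b)}\overline{\widehat\psi(aj\phi'_k(b))}$, and invoke Lemma~\ref{est0} together with $\mathrm{supp}\,\widehat\psi\subset[1-\Delta,1+\Delta]$ to kill all low-mode cross terms and localise the high-mode tail to $k<l$. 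The only wrinkle is your phrase ``folds the remaining estimate into the already-present Lipschitz term'' for the $k\geq l$, $j>D$ contributions: the paper does not absorb these anywhere---it argues that $\widehat\psi(aj\phi'_k(b))$ vanishes outright for such indices (using the separation built into~(\ref{thm:special1}) and $\Delta<d/(1+d)$), so nothing survives to be folded.
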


\begin{remark}  
We observe from this Lemma that when (\ref{thm:special1}) holds, the larger the $l$ is, the larger the $\Lambda_1(a,b)$ in Lemma \ref{est1}. Indeed, take $f=\sum_{k=1}^Kf_k\in\mathcal{C}^{\delta,D}_{\epsilon,d}$ and fix $f_l$ for some $l\in\{1,\ldots,K\}$ for example. The larger $l$ is, the more $\phi'_k(b)$ will be smaller than $\phi'_l(b)$ so that more $\hat{\psi}(a\phi_k'(b)n)$, $n>D$ become (possibly) positive. Consider $a$ such that $a\phi'_l(b)\in[1-\Delta,1+\Delta]$. For $k\in\{1,\ldots,K\}$, $k<l$, $\hat{\psi}(a\phi_k'(b)n)\neq 0$ holds only if $\frac{n\phi'_k(b)(1-\Delta)}{\phi'_l(b)}<1+\Delta$ or $1-\Delta<\frac{n\phi'_k(b)(1+\Delta)}{\phi'_l(b)}$, or equivalently
\begin{equation}\label{Nkl}
\frac{1-\Delta}{1+\Delta}\frac{\phi'_l(b)}{\phi'_k(b)}<n<\frac{1+\Delta}{1-\Delta}\frac{\phi'_l(b)}{\phi'_k(b)}.
\end{equation}
Clearly when the $l$ is large, $\hat{\psi}(a\phi_k'(b)n)\neq 0$ for more combinations of $k$ and $n$.
On the other hand, if $s_k(t)\neq e^{i t}$ for some $k<l$, due to the existence of the nonzero high Fourier modes of the shape function $s_k(t)$, that is, $\hat{s}_k(n)\neq 0$ for $n>1$, the nonzero $\hat{\psi}(a\phi_k'(b)n)$ will survive for those $n$ for which $\hat{s}_k(n)\neq 0$. Thus, the continuous wavelet coefficients around $Z_{l,n}$, $n=1,\ldots,D$, will be ``contaminated''. Putting these two effects together, we expect that the larger $l$ is and the more non-zero $\hat{s}_k(n)$, $n>D$ and $k<l$ are, the larger the $\Lambda_1(a,b)$ will be. Thus, the higher the $n>1$ is, the more blurring is around the band $Z_{k,n}$ on the time-frequency plane, which depend on the wave shape functions. It is for these reasons that we shall evaluate the IF from the {\em lowest} mode $\widehat{s}_k(1)$ of the $s_k$. We shall illustrate this below, in examples.
\end{remark}

\begin{proof}
Since $f\in \mathcal{C}^{\delta,D}_{\epsilon,d}\subset L^\infty(\RR)$ and the mother wavelet $\psi(t)$ is a Schwartz function, $W_f(a,b)$ is well defined. We work out the estimation step by step. First, we evaluate the following integration for $k\in\{1,\ldots,K\}$ and $(a,b)\in \RR_+\times\RR$:
\begin{eqnarray}
& &\int_\RR A_k(b)s_k\Big(2\pi(\phi_k(b)-b\phi_k'(b))+2\pi\phi_k'(b)t\Big)\frac{1}{\sqrt{a}}\overline{\psi\left(\frac{t-b}{a}\right)}\ud t\label{Clemma1:eq1}\\
&=& A_k(b)\int_\RR s_k(2\pi(\phi_k(b)-b\phi_k'(b))+2\pi x)\frac{1}{\sqrt{a}}\overline{\psi\left(\frac{x-b\phi_k'(b)}{a\phi_k'(b)}\right)}\frac{1}{\phi_k'(b)}\ud x\nonumber\\
&=& A_k(b)\int_\RR s_k(2\pi(\phi_k(b)-u))\frac{1}{\sqrt{a}}\overline{\psi\left(\frac{u}{a\phi_k'(b)}\right)}\frac{1}{\phi_k'(b)}\ud u\nonumber\\
&=& A_k(b)\sum_{n\in\ZZ}\hat{s}_k(n)\int_\RR e^{i2\pi n(\phi_k(b)-u)}\frac{1}{\sqrt{a}}\overline{\psi\left(\frac{u}{a\phi_k'(b)}\right)}\frac{1}{\phi_k'(b)}\ud u\nonumber\\
&=& \sum_{n\in\NN}A_k(b)\hat{s}_k(n) e^{i2\pi n\phi_k(b)}\sqrt{a}\overline{\hat{\psi}(a\phi_k'(b)n)}\nonumber,
\end{eqnarray}
where the first two equalities comes from the change of variable and the third equality comes from the fact that $s_k\in C^{1,\alpha}(\mathbb{T})$.% and the fourth equality comes from the fact that $\textup{supp}\hat{\psi}\subset[1-\Delta,1+\Delta]$. %The Plancheral equality holds since $\psi$ is a Schwartz function and $s_k(2\pi(\phi_k(b)-b\phi_k'(b))+2\pi x)$ is a tempered distribution.

Next, by applying Lemma \ref{est00}, we can evaluate the difference between the continuous wavelet transform of $A_k(b)s_k(2\pi\phi_k(t))$ and $A_k(b)s_k\Big(2\pi(\phi_k(b)-b\phi_k'(b))+2\pi\phi_k'(b)t\Big)$:
\begin{eqnarray}
&&\left|\int_\RR A_k(b)s_k(2\pi\phi_k(t))\frac{1}{\sqrt{a}}\overline{\psi\left(\frac{t-b}{a}\right)}\ud t\right.\nonumber\\
&&\left.\qquad -\int_\RR A_k(b)s_k\Big(2\pi(\phi_k(b)-b\phi_k'(b))+2\pi\phi_k'(b)t\Big)\frac{1}{\sqrt{a}}\overline{\psi\left(\frac{t-b}{a}\right)}\ud t\right|\nonumber\\
&=&\left|\int_\RR A_k(b)s_k\left[ 2\pi\left(\phi_k(b)-b\phi_k'(b)+\phi_k'(b)t+\int^{t-b}_0\left[\phi_k'(b+u)-\phi_k'(b)\right]\ud u\right)\right]\frac{1}{\sqrt{a}}\overline{\psi\left(\frac{t-b}{a}\right)}\ud t\right.\nonumber\\
&&\left.\qquad -\int_\RR A_k(b)s_k\Big[2\pi\big(\phi_k(b)-b\phi_k'(b)+\phi_k'(b)t\big)\Big]\frac{1}{\sqrt{a}}\overline{\psi\left(\frac{t-b}{a}\right)}\ud t\right|\nonumber\\
&=&A_k(b)\left|\int_\RR \left[\sum_{n\in\ZZ} \hat{s}_k(n)e^{i2\pi n\Big(\phi_k(b)-b\phi_k'(b)+\phi_k'(b)t\Big)}\left(e^{i2\pi n \int^{t-b}_0\left[\phi_k'(b+u)-\phi_k'(b)\right]\ud u}-1\right)\right]\frac{1}{\sqrt{a}}\overline{\psi\left(\frac{t-b}{a}\right)}\ud t\right|\nonumber\\
&\leq&A_k(b)\int_\RR \left[\sum_{n\in\ZZ} \left|\hat{s}_k(n)\right|\left|e^{i2\pi n \int^{t-b}_0\left[\phi_k'(b+u)-\phi_k'(b)\right]\ud u}-1\right|\right]\frac{1}{\sqrt{a}}\left|\overline{\psi\left(\frac{t-b}{a}\right)}\right|\ud t\nonumber\\
&\leq&2\pi A_k(b)\int_\RR \left[\sum_{n\in\ZZ} |n|\left|\hat{s}_k(n)\right|\left| \int^{t-b}_0\left[\phi_k'(b+u)-\phi_k'(b)\right]\ud u\right|\right]\frac{1}{\sqrt{a}}\left|\overline{\psi\left(\frac{t-b}{a}\right)}\right|\ud t\nonumber\\
&\leq&2\pi \epsilon A_k(b) \left[\sum_{n\in\ZZ} |n|\left|\hat{s}_k(n)\right|\right] \int_\RR \left[\frac{1}{2}|t-b|^2|\phi'_k(b)|+\frac{1}{6}|t-b|^3M_k''\right]\frac{1}{\sqrt{a}}\left|\overline{\psi\left(\frac{t-b}{a}\right)}\right|\ud t\nonumber\\
&\leq&2\pi \epsilon A_k(b) \left[\sum_{n\in\ZZ} |n|\left|\hat{s}_k(n)\right|\right] \left(\frac{1}{2}a^{5/2}I_2|\phi'_k(b)|+\frac{1}{6}M_k''a^{7/2}I_3\right)\nonumber
\end{eqnarray}
where the first equality comes from applying Taylor's expansion to $\phi_k(t)$ and the fifth inequality comes from Lemma \ref{est00}. Note that $\sum_{n\in\ZZ} |n|\left|\hat{s}_k(n)\right|<\infty$ since $s_k\in C^{1,\alpha}$ with $\alpha>1/2$.

Third, we approximate $A_k(t)$ by Lemma \ref{est00}:
\begin{eqnarray}
&&\left|W_f(a,b)-\sum_{k=1}^K\int_\RR A_k(b)s_k(2\pi\phi_k(t))\frac{1}{\sqrt{a}}\overline{\psi\left(\frac{t-b}{a}\right)}\ud t\right|\label{Clemma1:eq3}\\
&\leq& \sum_{k=1}^K \int_\RR |A_k(b)-A_k(t)||s_k(2\pi\phi_k(t))|\frac{1}{\sqrt{a}}\left|\overline{\psi\left(\frac{t-b}{a}\right)}\right|\ud t\nonumber\\
&\leq&\epsilon\sum_{k=1}^K \|s_k\|_\infty\int_\RR |t-b|\left(|\phi'_k(b)|+\frac{1}{2}M_k''|t-b|\right)\frac{1}{\sqrt{a}}\left|\overline{\psi\left(\frac{t-b}{a}\right)}\right|\ud t\nonumber\\
&\leq&\epsilon\sum_{k=1}^K \|s_k\|_\infty \left(\phi_k'(b)a^{3/2}I_1+\frac{1}{2}M''_k a^{5/2}I_2\right)\nonumber
\end{eqnarray}
where the second inequality comes from Lemma \ref{est00}.

When $l\in\{1,\ldots,K\}$, $n\in\{1,\ldots,D\}$ and $(a,b)\in\RR_+\times\RR$ such that $|an\phi'_l(b)-1|\leq \Delta$, (\ref{Clemma1:eq1}) and (\ref{Clemma1:eq3}) together lead to the result:
\begin{eqnarray}
& &|W_f(a,b)-A_l(b)\hat{s}_l(n) e^{i2\pi n\phi_l(b)}\sqrt{a}\overline{\hat{\psi}(an\phi_l'(b))}|\label{Clemma1:eq4}\\
&\leq& \left|\sum_{k=1,k\neq l}^KA_k(b)\hat{s}_k(n) e^{i2\pi n\phi_k(b)}\sqrt{a}\overline{\hat{\psi}(an\phi_k'(b))}+\sum_{k=1}^K\sum_{j\in\NN,j\neq n}A_k(b)\hat{s}_k(j) e^{i2\pi j\phi_k(b)}\sqrt{a}\overline{\hat{\psi}(a j\phi_k'(b))}\right|\nonumber\\
&&+\epsilon\sum_{k=1}^K \|s_k\|_\infty \left(\phi_k'(b)a^{3/2}I_1+\frac{1}{2}M''_k a^{5/2}I_2\right)\nonumber\\
&&+2\pi\epsilon\sum_{k=1}^K A_k(b) \left[\sum_{n\in\ZZ} |n|\left|\hat{s}_k(n)\right|\right] \left(\frac{1}{2}a^{5/2}I_2|\phi'_k(b)|+\frac{1}{6}M_k''a^{7/2}I_3\right)\nonumber
\end{eqnarray}

By the definition of IMTs, the second term and the third term in (\ref{Clemma1:eq4}) are bounded by $$\epsilon \left\{\sum_{k=1}^K\|s_k\|_\infty \left(\phi_k'(b)a^{3/2}I_1+\frac{M''_k}{2} a^{5/2}I_2\right)+\pi A_k(b) \left[\sum_{n\in\ZZ} |n|\left|\hat{s}_k(n)\right|\right] \left(a^{5/2}I_2|\phi'_k(b)|+\frac{M''_k}{3}a^{7/2}I_3\right)\right\}.
$$ 
Moreover, when (\ref{thm:special1}) holds, due to Lemma \ref{est0} and the assumption of $\psi$, the first term in (\ref{Clemma1:eq4}) is bounded by
\begin{eqnarray}
&&\left|\sum_{k=1,k\neq l}^KA_k(b)\hat{s}_k(n) e^{i2\pi n\phi_k(b)}\sqrt{a}\overline{\hat{\psi}(an\phi_k'(b))}+\sum_{k=1}^K\sum_{j\in\NN,j\neq n}A_k(b)\hat{s}_k(j) e^{i2\pi j\phi_k(b)}\sqrt{a}\overline{\hat{\psi}(a j\phi_k'(b))}\right|\nonumber\\
&=&\left|\sum_{k=1}^K\sum_{j\in\NN,j\neq n}A_k(b)\hat{s}_k(j) e^{i2\pi j\phi_k(b)}\sqrt{a}\overline{\hat{\psi}(a j\phi_k'(b))}\right|\nonumber\\
& \leq & \sqrt{a} I_0\sum_{k=1}^{l-1} A_k(b) \sum_{j\in\NN,j>D}|\hat{s}_k(j)|\leq \epsilon\sqrt{a} I_0\sum_{k=1}^{l-1} A_k(b),
\end{eqnarray}
where the first equality comes from the fact that $(a,b)\in Z_{l,n}$ and the assumption (\ref{thm:special1}), the first inequality holds since $|\hat{\psi}(a j\phi_k'(b))|\leq I_0$ and Lemma \ref{est0} and the second inequality holds since $f\in \mathcal{C}^{\delta,D}_{\epsilon,d}$. To be more precise in the first inequality, when $k=l$, $\hat{\psi}(aj\phi_l'(b))\neq 0$ holds only for $j=n$;  when $k>l$, no $j\in\NN$ leads to $\hat{\psi}(aj\phi'_k(b))\neq 0$; when $k<l$, $\hat{\psi}(aj\phi'_k(b))\neq 0$ is possible only when $j>D$. %Note that $\sum_{j\in\NN,j\neq n}|\hat{s}_k(j)|<\infty$ since $s_k\in C^{1,\alpha}$. 
The proof is thus done.%; when (\ref{thm:special2}) holds, due to Lemma \ref{est0} and the assumption of $\psi$, the first term in (\ref{Clemma1:eq4}) disappears and we finish the proof of (\ref{est1:rslt2}).
\end{proof}

In the next Lemma, we show that by differentiating the continuous wavelet transform, the instantaneous frequency $\phi_k'(b)$ and its multiples pop out. Though the instantaneous frequency is mixed up with other quantities, we will show that, when handled properly, this gives us an estimate of the instantaneous frequency with high accuracy. This Lemma is analogous to Lemma 3.9 in \cite{Daubechies2010}.
\begin{lemma}\label{est2}
%When (\ref{thm:special1}) holds, for $l\in\{1,\ldots,K\}$ and $(a,b)\in Z_{k,1}$, we have
%\begin{equation}\label{est2:rslt1}
%\left|-i\partial_bW_f(a,b)-2\pi A_l(b)\hat{s}_l(1)e^{i2\pi\phi_l(b)}\phi'_l(b)\sqrt{a}\widehat{\psi}\left(a\phi'_l(b)\right)\right|\leq
%\epsilon a^{1/2}\Gamma_2(a,b),
%\end{equation}
%where
%\begin{eqnarray}
%\_2(a,b)&=&\sum_{k=1}^K\Big\{\|s_k\|_\infty \left(\phi_k'(b)I'_1+\frac{1}{2}M''_k aI'_2\right)+\pi A_k(b) \left[\sum_{j\in\ZZ} |j|\left|\hat{s}_k(j)\right|\right] \left(aI'_2|\phi'_k(b)|+\frac{1}{3}M_k''a^{2}I'_3\right)\nonumber\\
%&&+2\pi A_k(b)\phi'_k(b)\left(\sum_{j\in\NN,j\geq 2}\frac{|\hat{s}_k(j)|}{\epsilon}\right)I_0\Big\}\nonumber
%\end{eqnarray}
%and $I'_i=\int_\RR |x|^i|\psi'(x)|\ud x$. 

For $l\in\{1,\ldots,K\}$, $n\in\{1,\ldots,D\}$ and $(a,b)\in Z_{l,n}$, we have
\begin{equation}\label{est2:rslt1}
\left|-i\partial_bW_f(a,b)-2\pi A_l(b)\hat{s}_l(n)e^{i2\pi n\phi_l(b)}n\phi'_l(b)\sqrt{a}\overline{\widehat{\psi}\left(an\phi'_l(b)\right)}\right|\leq
\epsilon a^{1/2}\Lambda_2(a,b),
\end{equation}
where
\begin{eqnarray}
\Lambda_2(a,b)&=&\sum_{k=1}^K\Big\{\|s_k\|_\infty \left(\phi_k'(b)I'_1+\frac{1}{2}M''_k aI'_2\right)+\pi A_k(b) \left[\sum_{j\in\ZZ} |j|\left|\hat{s}_k(j)\right|\right] \left(aI'_2|\phi'_k(b)|+\frac{1}{3}M_k''a^{2}I'_3\right)\Big\}\nonumber\\
&&+2\pi I_0\sum_{k=1}^{l-1} A_k(b)\phi'_k(b)\nonumber
\end{eqnarray}
and $I'_i=\int_\RR |x|^i|\psi'(x)|\ud x$. 
\end{lemma}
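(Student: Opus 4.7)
The plan is to mirror the three-part proof of Lemma~\ref{est1}, replacing the mother wavelet $\psi$ by $\psi'$. Since $\psi$ is Schwartz and $f\in L^\infty$, differentiating under the integral sign gives
\[
-i\partial_b W_f(a,b)\;=\;\frac{i}{a^{3/2}}\int_\RR f(t)\,\overline{\psi'\!\left(\frac{t-b}{a}\right)}\,\ud t,
\]
so $-i\partial_b W_f$ is a wavelet-type transform of $f$ against $\psi'$, but with an extra factor $a^{-1}$ compared with $W_f$. The single structural novelty beyond Lemma~\ref{est1} is the Fourier identity $\widehat{\psi'}(\xi)=2\pi i\xi\,\widehat{\psi}(\xi)$, which, once conjugated and combined with the $a^{-3/2}$ prefactor, converts $\tfrac{i}{\sqrt{a}}\,\overline{\widehat{\psi'}(an\phi'_k(b))}$ into $2\pi n\phi'_k(b)\sqrt{a}\,\overline{\widehat{\psi}(an\phi'_k(b))}$, which is precisely the factor appearing in~(\ref{est2:rslt1}).

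With this identification in hand, the argument proceeds in three stages exactly paralleling the proof of Lemma~\ref{est1}. First, linearizing each phase as $\phi_k(t)\approx\phi_k(b)+(t-b)\phi'_k(b)$ and freezing each amplitude $A_k(t)\approx A_k(b)$, expanding $s_k$ in its Fourier series, and performing the change of variables of~(\ref{Clemma1:eq1}) produces the expression
\[
\sum_{k,n} 2\pi n\phi'_k(b)\,\sqrt{a}\, A_k(b)\,\widehat{s}_k(n)\,e^{i2\pi n\phi_k(b)}\,\overline{\widehat{\psi}(an\phi'_k(b))};
\]
on $Z_{l,n}$, Lemma~\ref{est0} together with $\mathrm{supp}\,\widehat{\psi}\subset[1-\Delta,1+\Delta]$ collapses the sum over $(k,j)$ with $j\le D$ to the single pair $(l,n)$, producing the subtracted main term in~(\ref{est2:rslt1}). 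Second, the phase-linearization error is controlled by Taylor-expanding $\phi_k$ and invoking Lemma~\ref{est00} as in~(\ref{Clemma1:eq1}), and the amplitude-freezing error as in~(\ref{Clemma1:eq3}); the integrals $\int |t-b|^i|\psi'((t-b)/a)|\,\ud t$ now contribute $a^{i+1}I'_i$, and combined with the $a^{-3/2}$ prefactor they scale as $a^{i-1/2}$, so that factoring $a^{1/2}$ out produces the first two groups of terms in $\Lambda_2$.

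Third, the cross terms (those with $k\neq l$ or $j\neq n$) are restricted by Lemma~\ref{est0} and~(\ref{thm:special1}) to indices $k<l$ and $j>D$. Here I would use the bound $|\widehat{\psi'}(aj\phi'_k(b))|\le 2\pi aj\phi'_k(b)\, I_0$ obtained from $|\widehat{\psi}|\le I_0$, together with $\sum_{j>D}|j\widehat{s}_k(j)|\le\epsilon$ (from $s_k\in\SSS^{\delta,D,\epsilon}$), to produce the remaining third term $2\pi\epsilon\sqrt{a}\,I_0\sum_{k<l}A_k(b)\phi'_k(b)$ of $\Lambda_2$. The main obstacle is essentially bookkeeping: cleanly extracting the factor $2\pi n\phi'_l(b)$ via the $\widehat{\psi'}\leftrightarrow\widehat{\psi}$ identity while tracking signs and powers of $i$, and systematically re-tabulating the shift from $a^{i+1/2}$ to $a^{i-1/2}$ and from $I_i$ to $I'_i$ relative to Lemma~\ref{est1}.
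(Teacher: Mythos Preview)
Your proposal is correct and follows essentially the same approach as the paper. The paper's proof explicitly states that it ``follows the same lines as that for Lemma~\ref{est1}'', carrying out the three steps (\ref{Clemma2:eq1})--(\ref{Clemma2:eq3}) with $\psi'$ in place of $\psi$ and the prefactor $a^{-3/2}$, and then invoking ``the same argument as that for Lemma~\ref{est1}'' for the cross terms; your identification of the Fourier relation $\widehat{\psi'}(\xi)=2\pi i\xi\,\widehat{\psi}(\xi)$ as the mechanism producing the extra factor $2\pi n\phi'_l(b)$ in the main term, and of $\sum_{j>D}|j\widehat{s}_k(j)|\le\epsilon$ as the source of the third term in $\Lambda_2$, matches the paper exactly.
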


\begin{proof}
The proof follows the same lines as that for Lemma \ref{est1}. Under the same conditions, we can evaluate the following approximations. First,
\begin{eqnarray}
& &\int_\RR A_k(b)s_k(2\pi(\phi_k(b)-b\phi_k'(b))+2\pi\phi_k'(b)t)\frac{1}{a^{3/2}}\overline{\psi'\left(\frac{t-b}{a}\right)}\ud t\label{Clemma2:eq1}\\
%&=& A_k(b)\int_\RR s_k(2\pi(\phi_k(b)-b\phi_k'(b))+2\pi x)\frac{1}{a^{3/2}}\overline{\psi'\left(\frac{x-b\phi_k'(b)}{a\phi_k'(b)}\right)}\frac{1}{\phi_k'(b)}\ud x\nonumber\\
%&=& A_k(b)\int_\RR \sum_{n\in\ZZ}\hat{s}_k(n)\delta_n e^{i2\pi(\phi_k(b)-b\phi_k'(b))}i2\pi\phi_k'(b)\xi\sqrt{a}\overline{\hat{\psi}\left(a\phi_k'(b)\xi\right)}e^{i2\pi b\phi_k'(b)}\ud \xi\nonumber\\
&=& i2\pi A_k(b)\sum_{n\in\NN}\hat{s}_k(n)n\phi'_k(b) e^{i2\pi n\phi_k(b)}\sqrt{a}\overline{\hat{\psi}(an\phi_k'(b))}\nonumber.
\end{eqnarray}
Second,
\begin{eqnarray}
& &\left|\int_\RR A_k(b)s_k(2\pi\phi_k(t))\frac{1}{a^{3/2}}\overline{\psi'\left(\frac{t-b}{a}\right)}\ud t\right.\label{Clemma2:eq2}\\
&&\qquad \left.-\int_\RR A_k(b)s_k(2\pi(\phi_k(b)-b\phi_k'(b))+2\pi\phi_k'(b)t)\frac{1}{a^{3/2}}\overline{\psi'\left(\frac{t-b}{a}\right)}\ud t\right|\nonumber\\
%&=& A_k(b)\left|\int_\RR \left[\sum_{n\in\ZZ} \hat{s}_k(n)e^{i2\pi n\Big(\phi_k(b)-b\phi_k'(b)+\phi_k'(b)t\Big)}\left(e^{i2\pi n \int^{t-b}_0\left[\phi_k'(b+u)-\phi_k'(b)\right]\ud u}-1\right)\right]\frac{1}{a^{3/2}}\overline{\psi'\left(\frac{t-b}{a}\right)}\ud t\right|\nonumber\\
%&\leq&A_k(b)\int_\RR \left[\sum_{n\in\ZZ} \left|\hat{s}_k(n)\right|\left|e^{i2\pi n \int^{t-b}_0\left[\phi_k'(b+u)-\phi_k'(b)\right]\ud u}-1\right|\right]\frac{1}{a^{3/2}}\left|\overline{\psi'\left(\frac{t-b}{a}\right)}\right|\ud t\nonumber\\
&\leq&2\pi A_k(b)\int_\RR \left[\sum_{n\in\ZZ} |n|\left|\hat{s}_k(n)\right|\left| \int^{t-b}_0\left[\phi_k'(b+u)-\phi_k'(b)\right]\ud u\right|\right]\frac{1}{a^{3/2}}\left|\overline{\psi'\left(\frac{t-b}{a}\right)}\right|\ud t\nonumber\\
&\leq&2\pi \epsilon A_k(b) \left[\sum_{n\in\ZZ} |n|\left|\hat{s}_k(n)\right|\right] \left(\frac{1}{2}a^{3/2}I'_2|\phi'_k(b)|+\frac{1}{6}M_k''a^{5/2}I'_3\right),\nonumber
\end{eqnarray}
where $\sum_{n\in\ZZ} |n|\left|\hat{s}_k(n)\right|<\infty$ since $s_k\in C^{1,\alpha}$ and $\alpha>1/2$. And third,
\begin{eqnarray}
&&\left|-\partial_bW_f(a,b)-\sum_{k=1}^K\int_\RR A_k(b)s_k(2\pi\phi_k(t))\frac{1}{a^{3/2}}\overline{\psi'\left(\frac{t-b}{a}\right)}\ud t\right|\label{Clemma2:eq3}\\
%&\leq& \sum_{k=1}^K \int_\RR |A_k(b)-A_k(t)||s_k(2\pi\phi_k(t))|\frac{1}{a^{3/2}}\left|\overline{\psi'\left(\frac{t-b}{a}\right)}\right|\ud t\nonumber\\
%&\leq&\epsilon\sum_{k=1}^K \|s_k\|_\infty\int_\RR |t-b|\left(|\phi'_k(b)|+\frac{1}{2}M_k''|t-b|\right)\frac{1}{a^{3/2}}\left|\overline{\psi'\left(\frac{t-b}{a}\right)}\right|\ud t\nonumber\\
&\leq&\epsilon\sum_{k=1}^K \|s_k\|_\infty \left(\phi_k'(b)a^{1/2}I'_1+\frac{1}{2}M''_k a^{3/2}I'_2\right).\nonumber
\end{eqnarray}
When $l\in\{1,\ldots,K\}$, $n\in\{1,\ldots,D\}$ and $(a,b)\in\RR_+\times\RR$ such that $|an\phi'_l(b)-1|\leq \Delta$, (\ref{Clemma2:eq1})-(\ref{Clemma2:eq3}) together lead to the result:
\begin{eqnarray}
& &|-i\partial_bW_f(a,b)-2\pi A_l(b)\hat{s}_l(n) e^{i2\pi n\phi_l(b)}n\phi_l'(b)\sqrt{a}\overline{\hat{\psi}(an\phi_l'(b))}|\nonumber\\
&\leq& 2\pi\left|\sum_{k=1,k\neq l}^KA_k(b)\hat{s}_k(n)e^{i2\pi n\phi_k(b)}n\phi_k'(b) \sqrt{a}\overline{\hat{\psi}(an\phi_k'(b))}+\sum_{k=1}^K \sum_{j\in\NN,j\neq n}A_k(b)\hat{s}_k(j) e^{i2\pi j\phi_k(b)}j\phi'_k(b)\sqrt{a}\overline{\hat{\psi}(aj\phi_k'(b))}\right|\nonumber\\
&&+\epsilon\sum_{k=1}^K \|s_k\|_\infty \left(\phi_k'(b)a^{1/2}I'_1+\frac{1}{2}M''_k a^{3/2}I'_2\right)\nonumber\\
&&+2\pi \epsilon \sum_{k=1}^KA_k(b) \left[\sum_{j\in\ZZ} |j|\left|\hat{s}_k(j)\right|\right] \left(\frac{1}{2}a^{3/2}I'_2|\phi'_k(b)|+\frac{1}{6}M_k''a^{5/2}I'_3\right).\nonumber
%&\leq& \epsilon a^{1/2}\sum_{k=1}^K\Big\{\|s_k\|_\infty \left(\phi_k'(b)I'_1+\frac{1}{2}M''_k aI'_2\right)+\pi A_k(b) \left[\sum_{n\in\ZZ} |n|\left|\hat{s}_k(n)\right|\right] \left(aI'_2|\phi'_k(b)|+\frac{1}{3}M_k''a^{2}I'_3\right)\nonumber\\
%&&+2\pi A_k(b)\phi'_k(b)\left(\sum_{n\in\NN,n\geq 2}\frac{|\hat{s}_k(n)\hat{\psi}(a\phi_k'(b)n)|}{\epsilon}\right)\Big\}\nonumber,
\end{eqnarray}
By the same argument as that for Lemma \ref{est1}, we get (\ref{est2:rslt1}).% under condition (\ref{thm:special1}) and (\ref{est2:rslt2}) under condition (\ref{thm:special2}).
%where the last inequality holds by the same argument as in (\ref{Clemma1:eq4}) in Lemma \ref{est1}.
\end{proof}
}

The same remark for Lemma \ref{est1} holds for Lemma \ref{est2}. That is, the larger the difference between the wave shape functions and the cosine function, the larger the possible negative effect on the precision of the estimate. The following Lemma clarifies the role of the function $\omega_f(a,b)$. Indeed, it states that $\omega_f(a,b)$ provides the information of the instantaneous frequency of each component. Since the proof of this Lemma is the same as that of Estimate 3.8 in \cite{Daubechies2010}, we shall skip it.
\begin{lemma}\label{est3}
%When (\ref{thm:special1}) holds, for $l \in\{1,\ldots,K\}$ and $(a,b)\in Z_{l,1}$ so that $|W_f(a,b)| \geq \weps $, we have
%\[
%\left|\omega_f(a,b)-\phi'_l(b)\right|\leq \epsilon^{2/3}a^{1/2}(\Gamma_1\phi'_l(b)+\Gamma_2).
%\]
For $l \in\{1,\ldots,K\}$, $n\in\{1,\ldots,D\}$ and $(a,b)\in Z_{l,n}$ so that $|W_f(a,b)| \geq \weps$, we have
\[
\left|\omega_f(a,b)-n\phi'_l(b)\right|\leq \epsilon^{2/3}a^{1/2}(\Lambda_1n\phi'_l(b)+\Lambda_2/2\pi).
\]

\end{lemma}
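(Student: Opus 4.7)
The plan is to combine Lemmas \ref{est1} and \ref{est2} by exploiting the fact that their ``main terms'' differ precisely by a factor of $2\pi n\phi'_l(b)$, then use the threshold assumption $|W_f(a,b)|\geq\weps$ to control the denominator in the definition of $\omega_f$.

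Concretely, first I would denote the main term from Lemma \ref{est1} by
\[
M(a,b):=A_l(b)\hat{s}_l(n)e^{i2\pi n\phi_l(b)}\sqrt{a}\,\overline{\widehat{\psi}(an\phi'_l(b))},
\]
so that Lemma \ref{est1} reads $W_f(a,b)=M(a,b)+R_1(a,b)$ with $|R_1|\leq \epsilon a^{1/2}\Lambda_1(a,b)$, and Lemma \ref{est2} reads $-i\partial_b W_f(a,b)=2\pi n\phi'_l(b)M(a,b)+R_2(a,b)$ with $|R_2|\leq\epsilon a^{1/2}\Lambda_2(a,b)$. The key algebraic observation is that the ``phase derivative'' main term is exactly $2\pi n\phi'_l(b)$ times the ``amplitude'' main term.

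Next I would write
\[
\omega_f(a,b)-n\phi'_l(b)\;=\;\frac{-i\partial_b W_f(a,b)-2\pi n\phi'_l(b)\,W_f(a,b)}{2\pi\,W_f(a,b)}\;=\;\frac{R_2(a,b)-2\pi n\phi'_l(b)\,R_1(a,b)}{2\pi\,W_f(a,b)},
\]
since the two $M(a,b)$ contributions in the numerator cancel. Taking absolute values, applying the triangle inequality to the numerator, and using the threshold assumption $|W_f(a,b)|\geq\weps=\epsilon^{1/3}$ in the denominator yields
\[
\bigl|\omega_f(a,b)-n\phi'_l(b)\bigr|\;\leq\;\frac{\epsilon a^{1/2}\Lambda_2(a,b)+2\pi n\phi'_l(b)\,\epsilon a^{1/2}\Lambda_1(a,b)}{2\pi\,\epsilon^{1/3}}\;=\;\epsilon^{2/3}a^{1/2}\Bigl(n\phi'_l(b)\Lambda_1+\tfrac{\Lambda_2}{2\pi}\Bigr),
\]
which is the claimed bound.

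No real obstacle arises here: both estimates needed have already been proved in Lemmas \ref{est1} and \ref{est2}, and the only arithmetic step is the cancellation of the main terms in the numerator together with the trade-off $\epsilon/\weps=\epsilon^{2/3}$. The one subtlety worth flagging is that one must actually be able to divide by $W_f(a,b)$, which is guaranteed by the hypothesis $|W_f(a,b)|\geq\weps>0$; this is precisely why the threshold enters the statement. The proof is therefore a short consequence of the two preceding technical lemmas, exactly mirroring the argument for Estimate 3.8 in \cite{Daubechies2010}.
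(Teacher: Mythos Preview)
Your proof is correct and follows exactly the approach the paper intends: the paper omits the argument, pointing to Estimate~3.8 in \cite{Daubechies2010}, and your cancellation-of-main-terms computation combined with the threshold bound $|W_f(a,b)|\geq\weps=\epsilon^{1/3}$ is precisely that argument adapted to the present setting.
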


It is now clear that with appropriate restrictions on $\epsilon$, the first two claims in Theorem \ref{mainthm1} are proved. Indeed, note that by the definition of $Z_{k,n}$ and the uniform lower and upper bounds on $\phi'_k(b)$, we know that the values of $a$ for which $(a,b) \in \cup_{n=1}^D\cup_{k=1}^K Z_{k,n}$ are uniformly bounded. Then, supposing that (\ref{thm:special1}) holds, due to the uniform boundedness of $A_k(b)$ and $\phi'_k(b)$, it follows that $\Lambda_1(a,b)$ and $\Lambda_2(a,b)$ are uniformly bounded in $\cup_{n=1}^D\cup_{k=1}^K Z_{k,n}$ as well. Thus, for $(a,b) \in \cup_{n=1}^D\cup_{k=1}^K Z_{k,n}$, there exists $\epsilon>0$ so that
\begin{equation}\label{condeps1}
\epsilon<a^{-3/4}\,\Lambda_1^{-3/2}~,
\end{equation}
which leads to $\epsilon a^{1/2}\Lambda_1(a,b)<\weps $. If further we impose the condition that for all $l=1,\ldots,K$ and $n=1,\ldots,D$,
\begin{equation}\label{condeps2}
\epsilon< a^{-3/2}(\Lambda_1n\phi'_l(b)+\Lambda_2/2\pi)^{-3},
\end{equation}
then $\epsilon^{2/3} a^{1/2}(\Lambda_1n\phi'_l(b)+\Lambda_2/2\pi)<\weps $. Thus the first two claims in Theorem \ref{mainthm1} hold.%The choice of $\epsilon$ when (\ref{thm:special2}) holds follows the same argument.

The final Lemma concerns the reconstruction of each component; when $s_k(t)=e^{i t}$ for all $k=1,...,K$, we recover Lemma 3.9 in \cite{Daubechies2010}. 
%Notice that the error constant $C$ depend on the wave shape functions.%; the dependence is hidden in (\ref{condeps1}) and (\ref{condeps2}).
\begin{lemma}\label{est4}
Suppose that both (\ref{condeps1}) and (\ref{condeps2}) are satisfied, and that, in addition, for all $b$ and $k\in\{1,\ldots,K\}$ under consideration,
\begin{equation}\label{condeps3}
\epsilon \leq \min\left\{\frac{d^3 [\phi'_1(b)+\phi'_2(b)]^3}{8}, \frac{27d^3 \phi'_1(b)^3}{8}\right\}.
\end{equation}
Then for any $b \in \RR$
\[
\left|\lim_{\alpha\rightarrow 0}\left(\mathcal{R}_{\psi}^{-1}\int_{\cup_{n=1}^D\{\xi:\,|\xi-n\phi'_k(b)|<\weps\}}S^{\alpha,\gamma}_{f}(b,\xi)\ud\xi\right) -A_k(b)s_k(2\pi \phi_k(b))\right|\leq C\weps,
\]
where $C=A_k(b)\weps^2+4\left[\left(\frac{\phi'_k(b)}{1-\Delta}\right)^{1/2}-\left(\frac{\phi'_k(b)}{1+\Delta}\right)^{1/2}\right]< A_k(b)+4\left[\left(\frac{\phi'_k(b)}{1-\Delta}\right)^{1/2}-\left(\frac{\phi'_k(b)}{1+\Delta}\right)^{1/2}\right]$.
\end{lemma}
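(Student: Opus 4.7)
The plan is to mimic the proof of Lemma 3.9 in \cite{Daubechies2010}, modified to account for the fact that each component $A_k s_k(2\pi\phi_k)$ now contributes $D$ concentrated bands rather than a single one. First I would exchange $\xi$- and $a$-integrations inside $S^{\alpha,\weps}_f$ and pass to the limit $\alpha\to 0$. Because $\alpha^{-1}h(\cdot/\alpha)$ is an approximate identity, the inner integral $\int_R \alpha^{-1}h((\xi-\omega_f(a,b))/\alpha)\,\ud\xi$ tends to the indicator $\mathbf{1}_{\omega_f(a,b)\in R}$, so
\[
\lim_{\alpha\to 0}\int_R S^{\alpha,\weps}_f(b,\xi)\,\ud\xi \;=\; \int_{\{a:\,|W_f(a,b)|>\weps,\,\omega_f(a,b)\in R\}} W_f(a,b)\,a^{-3/2}\,\ud a,
\]
where $R=\cup_{n=1}^D\{\xi:|\xi-n\phi'_k(b)|<\weps\}$.

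The second step is to identify this $a$-set with $\cup_{n=1}^D\{a:(a,b)\in Z_{k,n},\,|W_f(a,b)|>\weps\}$. The first bullet of Theorem \ref{mainthm1} forces $(a,b)\in Z_{l,n'}$ for some $(l,n')$ whenever $|W_f|>\weps$, and Lemma \ref{est3} then places $\omega_f$ within $\weps$ of $n'\phi'_l(b)$. Condition (\ref{condeps3}), which gives $\weps=\epsilon^{1/3}\leq \min\{d(\phi'_1+\phi'_2)/2,\,3d\phi'_1/2\}$, must be used to conclude that the collection $\{n'\phi'_l(b):\,l\leq K,\,n'\leq D\}$ is pairwise $2\weps$-separated; together with Lemma \ref{est0} and the dyadic separation built into $\mathcal{C}^{\delta,D}_{\epsilon,d}$, this rules out any $(l,n')\neq(k,n)$ contributing to $R$.

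On each $Z_{k,n}$ I would then substitute the expansion of Lemma \ref{est1}, writing $W_f(a,b)=A_k(b)\hat s_k(n)e^{i2\pi n\phi_k(b)}\sqrt a\,\overline{\widehat\psi(an\phi'_k(b))}+E(a,b)$ with $|E|\leq\epsilon a^{1/2}\Lambda_1(a,b)\leq\weps$ by (\ref{condeps1}). The change of variable $u=an\phi'_k(b)$ evaluates the integral of the leading term against $a^{-3/2}\,\ud a$ over the entire slice $\{a:(a,b)\in Z_{k,n}\}$ to $A_k(b)\hat s_k(n)e^{i2\pi n\phi_k(b)}\mathcal{R}_\psi$, since the Jacobian cancels an $a^{-1}$ and $\overline{\widehat\psi}$ is supported on $[1-\Delta,1+\Delta]$. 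Summing over $n=1,\ldots,D$ and dividing by $\mathcal{R}_\psi$ yields the partial Fourier sum $A_k(b)\sum_{n=1}^D\hat s_k(n)e^{i2\pi n\phi_k(b)}$.

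Finally I would collect three error pieces. (i) The truncation tail $A_k(b)\sum_{n>D}\hat s_k(n)e^{i2\pi n\phi_k(b)}$ is bounded by $A_k(b)\sum_{n>D}|n\hat s_k(n)|\leq A_k(b)\epsilon=A_k(b)\weps^3$, giving the $A_k(b)\weps^2\cdot\weps$ contribution to $C\weps$. (ii) The $Z_{k,n}$-integral of $|E(a,b)|a^{-3/2}$ is $O(\weps)$ uniformly thanks to (\ref{condeps1}) and the boundedness of $\int_{Z_{k,n}}a^{-1}\,\ud a=\log\frac{1+\Delta}{1-\Delta}$. (iii) The sub-region $\{|W_f|\leq\weps\}\cap Z_{k,n}$ that is discarded by the threshold contributes at most $\weps\int_{(1-\Delta)/(n\phi'_k(b))}^{(1+\Delta)/(n\phi'_k(b))}a^{-3/2}\,\ud a = 2\weps[(n\phi'_k(b)/(1-\Delta))^{1/2}-(n\phi'_k(b)/(1+\Delta))^{1/2}]$, yielding the second term of $C$ after summation over $n$ and normalization by $\mathcal{R}_\psi^{-1}$. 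I expect the main obstacle to be the bookkeeping in step two: rigorously deducing the required pairwise $2\weps$-separation of all $DK$ multiples $n'\phi'_l(b)$ from the comparatively compact hypothesis (\ref{condeps3}), and verifying that the cross-contamination captured in $\Lambda_1$ (coming from $l<k$ via the $I_0\sum_{k<l}A_k$ term) does not corrupt the leading-order identification on $Z_{k,n}$.
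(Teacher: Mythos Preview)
Your proposal is correct and follows essentially the same route as the paper's proof: pass to the limit via Fubini and the approximate identity, use the first bullet of Theorem \ref{mainthm1} together with Lemma \ref{est3} and (\ref{condeps3}) to identify the surviving $a$-domain with $A_{\weps,f}(b)\cap\big(\cup_{n=1}^D\{a:|an\phi'_k(b)-1|<\Delta\}\big)$, then apply Lemma \ref{est1} on each slice and collect the tail, approximation, and sub-threshold errors. The only cosmetic difference is that the paper bounds your error piece (ii) by $\weps\int a^{-3/2}\,\ud a$ directly (since $|E|\leq\weps$ by (\ref{condeps1})) rather than via $\int_{Z_{k,n}}a^{-1}\,\ud a$, which is why (ii) and (iii) combine to give the factor $4$ in $C$.
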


\begin{proof}
Fix $b\in\RR$. As a function a $a$, $W_f(a,b)\in C^\infty(A_{\weps,f}(b))$, so by the definition of the wavelet Synchrosqueezing transform, as a function of $\xi$, $S^{\alpha,\weps}_{f}(b,\xi)\in C^\infty(\RR)$. Thus, we have
\begin{eqnarray}
&&\lim_{\alpha\rightarrow 0}\int_{\cup_{n=1}^D\{\xi:\,|\xi-n\phi'_k(b)|<\weps\}}S^{\alpha,\weps}_{f}(b,\xi)\ud\xi\label{lemma4:est1}\\
&=& \lim_{\alpha\to 0}\int_{\cup_{n=1}^D\{\xi:\,|\xi-n\phi'_k(b)|<\weps\}}\int_{A_{\weps,f}(b)} W_f(a,b)\frac{1}{\alpha}h\left(\frac{|\xi-\omega_f(a,b)|}{\alpha}\right)a^{-3/2} \ud a\ud\xi\nonumber \\
&=&\lim_{\alpha\to 0}\int_{A_{\weps,f}(b)}a^{-3/2}W_f(a,b) \int_{\cup_{n=1}^D\{\xi:\,|\xi-n\phi'_k(b)|<\weps\}}\frac{1}{\alpha}h \left(\frac{|\xi-\omega_f(a,b)|}{\alpha}\right)\ud\xi \ud a\nonumber\\
&=&\int_{A_{\weps,f}(b)}\lim_{\alpha \to 0}a^{-3/2}W_f(a,b)\int_{\cup_{n=1}^D\{\xi:\,|\xi-n\phi'_k(b)|<\weps\}}\frac{1}{\alpha}h\left(\frac{|\xi-\omega_f(a,b)|}{\alpha}\right)\ud\xi \ud a\nonumber\\
&=&\int_{ A_{\weps,f}(b)\cap \left(\cup_{n=1}^D \{a: |\omega_f(a,b)-n\phi'_k(b)|<\weps \}\right)} W_f(a,b) a^{-3/2} \ud a\nonumber\, ,
\end{eqnarray}
where we have used Fubini's theorem for the second equality, the Dominant Convergence theorem
for the third equality, and the approximation of identity for the fourth equality. Indeed, the integrand on the third line is bounded by $a^{-3/2}|W_f(a,b)|\in L^1(A_{\weps,f}(b))$ 
and converges almost everywhere to $a^{-3/2}W_f(a,b)$ if $|\omega_f(a,b)-n\phi'_k(b)|<\weps$ for some $n\in\{1,\ldots,D\}$, and to zero otherwise. 

We now claim that there is only one $l\in\{1,\ldots,K\}$ and $n\in\{1,\ldots,D\}$ for which $|an\phi_l'(b) - 1 | <\Delta$ and $|\omega_f(a,b)-n\phi'_l(b)|<\weps$ hold simultaneously. In fact, if there exist $l'\in\{1,\ldots,K\}$ and $n'\in\{1,\ldots,D\}$ so that $|an'\phi'_{l'}(b)-1|<\Delta$ holds, where $l\neq l'$ or $n\neq n'$, we get
\begin{equation*}
|\omega_f(a,b)-n'\phi'_{l'}(b)| \geq |n\phi'_{l}(b)-n'\phi'_{l'}(b)|-|\omega_f(a,b)-n\phi'_l(b)|\geq d[n\phi'_{l}(b)+n'\phi'_{l'}(b)]-\weps,%>\weps,
\end{equation*}
where the second inequality comes from (\ref{est0:eq2}) and Lemma \ref{est3}. Notice that
%Note that for all $l,l' \in \{1,\ldots,K\}$ and $n,n'\in\{1,\ldots,D\}$, (\ref{condeps3}) implies that
\begin{equation}\label{ax}
d [n\phi'_{l}(b)+n'\phi'_{l'}(b)] \geq d\min\left\{ [\phi'_1(b)+\phi'_2(b)], 3\phi'_1(b)\right\} \geq 2 \weps,
\end{equation}
where the first inequality holds since $n\phi'_l(b)\geq \phi'_1(b)$, $n'\phi'_{l'}(b)\geq \phi'_1(b)$ and $\phi'_2(b)$ might be larger than $2\phi'_1(b)$ and the second inequality comes from (\ref{condeps3}). Thus we conclude that 
\begin{equation*}
|\omega_f(a,b)-n'\phi'_{l'}(b)| >\weps,
\end{equation*}
which is absurd.
Next, from Lemma \ref{est2} and (\ref{condeps1}) we
know that $|W_f(a,b)|> \weps$ only when $|an\phi_l'(b) - 1 | < \Delta$ for some $l \in \{1,\ldots,K\}$ and $n\in\{1,\ldots,D\}$. Hence we know 
$$
A_{\weps,f}(b)\cap \left(\cup_{n=1}^D \{a: |\omega_f(a,b)-n\phi'_k(b)|<\weps \}\right)=A_{\weps,f}(b)\cap \left(\cup_{n=1}^D \{a:|an\phi_k'(b) - 1|<\Delta\}\right)
$$ 
and the right hand side of (\ref{lemma4:est1}) becomes
\begin{eqnarray}
&&\int_{ A_{\weps,f}(b)\cap \left(\cup_{n=1}^D \{a:|an\phi_k'(b) - 1|<\Delta\}\right)} W_f(a,b) a^{-3/2} \ud a\nonumber\\
&=&\int_{\cup_{n=1}^D\{a:|an\phi_k'(b) - 1|<\Delta\}}W_f(a,b) a^{-3/2}\ud a - \int_{ \left(\cup_{n=1}^D\{|an\phi_k'(b) -1|<\Delta\}\right)\backslash A_{\weps,f}(b)}W_f(a,b)a^{-3/2}\ud a~.\nonumber
\end{eqnarray}
Thus we obtain
\begin{eqnarray}
&&\left|\lim_{\alpha\to 0}\mathcal{R}_{\psi}^{-1}\int_{\cup_{n=1}^D\{\xi:\,|\xi-n\phi'_k(b)|<\weps\}}S^{\alpha,\weps}_{f}(b,\xi)\ud\xi -A_k(b)s_k(2\pi\phi_k(b))\right|\nonumber\\
&\leq& \left|\mathcal{R}_{\psi}^{-1}\left(\int_{\cup_{n=1}^D\{a:|an\phi_k'(b) - 1|<\Delta\}}W_f(a,b) a^{-3/2}\ud a \right)-A_k(b)s_k(2\pi \phi_k(b)) \right| \nonumber\\
&&+\mathcal{R}_{\psi}^{-1}\left|\int_{ \left(\cup_{n=1}^D\{|an\phi_k'(b) -1|<\Delta\}\right)\backslash A_{\weps,f}(b)}W_f(a,b)a^{-3/2}\ud a \right| \nonumber\\
&\leq&\left|\mathcal{R}_{\psi}^{-1}A_k(b)\sum_{n=1}^D\hat{s}_k(n)e^{i2\pi n\phi_k(b)}\left(\int_{|an\phi'_k(b)-1|<\Delta}\sqrt{a}\overline{\widehat{\psi}(an \phi'_k(b))}a^{-3/2}\ud a\right)-A_k(b)s_k(2\pi\phi_k(b))\right|\nonumber\\
&&+\mathcal{R}_{\psi}^{-1}\sum_{n=1}^D\int_{|an\phi'_k(b)-1|<\Delta}\weps a^{-3/2}\ud a+\mathcal{R}_{\psi}^{-1}\left|\int_{\left(\cup_{n=1}^D\{|an\phi_k'(b) -1|<\Delta\}\right)\backslash A_{\weps,f}(b)}W_f(a,b)a^{-3/2} \ud a \right|\nonumber\\
&\leq&\left|A_k(b)\sum_{n=1}^D\hat{s}_k(n)e^{i2\pi n\phi_k(b)}-A_k(b)s_k(2\pi\phi_k(b))\right|+2\mathcal{R}_{\psi}^{-1}\sum_{n=1}^D\int_{|an\phi'_k(b)-1|<\Delta}\weps a^{-3/2}\ud a\label{lemma4:est3}
%&\leq&\left|\mathcal{R}_{\psi}^{-1}A_k(b)\hat{s}_k(1)e^{i2\pi\phi_k(b)}\left(\int_{|a\phi'_k(b)-1|<\Delta}\sqrt{a}\widehat{\psi}(a \phi'_k(b))a^{-3/2}\ud a\right)-A_k(b)\hat{s}_k(1)e^{i2\pi\phi_k(b)}\right|\nonumber\\
%&&+2\mathcal{R}_{\psi}^{-1}\int_{|a\phi'_k(b)-1|<\Delta}\weps a^{-3/2}\ud a\label{lemma4:est3}.
\end{eqnarray}
where the second inequality comes from Lemma \ref{est1} and \ref{condeps1} and the third inequality comes from Lemma \ref{est1} and the fact that
\[
\int_{|a n\phi'_k(b)-1|<\Delta}\overline{\widehat{\psi}(an\phi'_k(b))} a^{-1}\ud a = \int_{|\zeta-1|<\Delta}\overline{\widehat{\psi}(\zeta)}\zeta^{-1}\ud\zeta=\mathcal{R}_\psi.
\]
The first term in (\ref{lemma4:est3}) is bounded by
\[
\left| A_k(b)\sum_{n=D+1}^\infty\hat{s}_k(n)e^{i2\pi n\phi_k(b)} \right| \leq A_k(b)\sum_{n=D+1}^\infty|\hat{s}_k(n)|\leq \epsilon A_k(b).
\]
The second term in (\ref{lemma4:est3}) can be worked out explicitly:
$$
2\mathcal{R}_{\psi}^{-1}\sum_{n=1}^D\int_{|an\phi'_k(b)-1|<\Delta}\weps a^{-3/2}\ud a=4\weps\sum_{n=1}^D\left[\left(\frac{n\phi'_k(b)}{1-\Delta}\right)^{1/2}-\left(\frac{n\phi'_k(b)}{1+\Delta}\right)^{1/2}\right].
$$
We thus conclude
\begin{eqnarray}
&&\left|\lim_{\alpha\to 0}\mathcal{R}_{\psi}^{-1}\int_{\cup_{n=1}^D\{\xi:\,|\xi-n\phi'_k(b)|<\weps\}}S^{\alpha,\weps}_{f}(b,\xi)\ud\xi -A_k(b)s_k(2\pi\phi_k(b))\right|\nonumber\\
&\leq&
\epsilon A_k(b)+4\weps\left[\left(\frac{\phi'_k(b)}{1-\Delta}\right)^{1/2}-\left(\frac{\phi'_k(b)}{1+\Delta}\right)^{1/2}\right]\nonumber.
\end{eqnarray}
%Notice that since $s_k\in \mathcal{S}^{\delta,D,\epsilon}$, $\sum_{n=D+1}^\infty|\hat{s}_k(n)|=O(\epsilon)$.
\end{proof}

%%%%%%%%%%%%%%%%%%%%%%%%%%%%%
Combining Lemmas \ref{est1}-\ref{est4} completes the proof of Theorem \ref{mainthm1}.
\begin{remark}
This Theorem suggests that if we know a priori that the components of the signal have wave shape function close to an imaginary exponential (or if its real part has components close to a cosine function), then focusing on the band around the instantaneous frequency $\phi'_k(b)$ and its multiples allows us to recover the signal. \end{remark}

Now we demonstrate some numerical results of applying the Synchrosqueezing transform to analyze the functions in $\mathcal{C}^{\delta,D}_{\epsilon,d}$. 
Take the phase functions $\phi_1(t)=1.5t+0.2\cos(t+1)$ and $\phi_2(t)=4.5(t+0.2\cos(t))$ and the amplitude modulation functions $A_1(t)=1+0.1\sin(t^{1.1})$ and $A_2(t)=\sqrt{1+\cos(t)}$. Consider the following two functions for comparison:
$$
f_1(t)=A_1(t)s_{\II}(2\pi\phi_1(t))+A_2(t)s_1(2\pi\phi_2(t))
$$
and
$$
f_2(t)=\frac{A_1(t)}{3.5}\cos(2\pi\phi_1(t))+A_2(t)s_1(2\pi\phi_2(t)),
$$
where $s_1(t)$ and $s_{\II}(t)$ are the shape functions demonstrated in Figure \ref{shape3}.
Clearly, $f_1$ and $f_2$ are both composed of one component with instantaneous frequency $1.5-0.2\sin(t+1)$ and one component with instantaneous frequency $4.5-0.9\sin(t)$ but with different wave shape functions in the low frequency component.

As is shown in Figure \ref{shape3}, the coefficients of the high Fourier modes of $s_{\II}$ are quite significant, whereas those of $\frac{1}{3.5}\cos(t)$ are all 0. In this case, since $\phi_2$ is roughly $3$ times $\phi_1$, it is the coefficient of the third Fourier mode of $s_{\II}$ that matters. Thus, according to Theorem \ref{mainthm1}, we expect to have worse $\phi'_2$ estimation from $f_1$, which is borne out by Figure \ref{ex1}.

\begin{figure}[h]
\subfigure{
\includegraphics[width=0.95\textwidth]{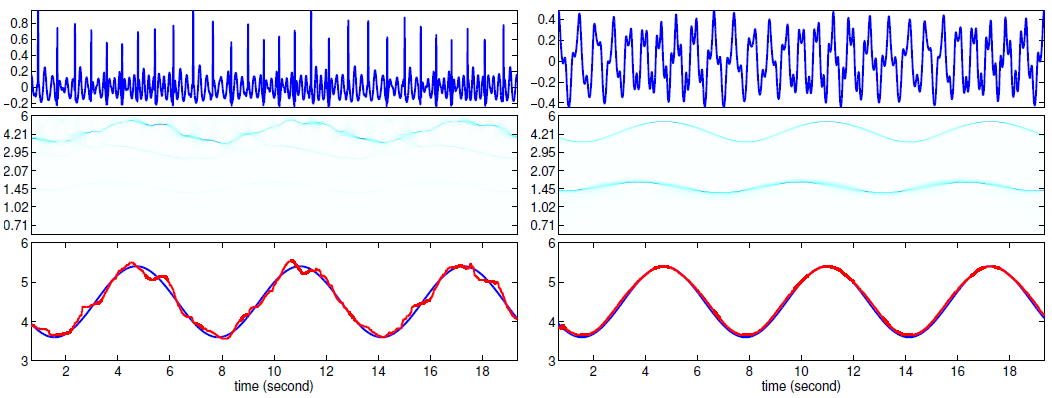}}
\caption{Left top: the $f_1$ signal; left middle: the Synchrosqueezing transform of $f_1$, where the y-axis is demonstrated in the log scale; left bottom: the blue curve is $\phi'_2(t)$, while the red curve is the estimated $\phi'_2$; right top: the $f_2$ signal; right middle: the Synchrosqueezing transform of $f_2$, where the y-axis is demonstrated in the log scale; right bottom: the blue curve is $\phi'_2(t)$, while the red curve is the estimated $\phi'_2$. It is clear that the estimation of $\phi'_2$ is worse in $f_1$. Notice that the dominant curve around $\phi'_1$ in the left middle figure is not very marked. This is caused by the small coefficient of the first Fourier mode of $\hat{s}_{\II}$.}\label{ex1}
\end{figure}

Next we show the component reconstruction results stated in Theorem \ref{mainthm1}. Take the phase functions $\phi_3(t)=4t+0.1\cos(t+1)$ and $\phi_4(t)=5t+0.2\cos(t)^2$ and the wave shape functions $s_3(t)$ and $s_4(t)$. The Fourier modes of $s_3(t)$ and $s_4(t)$ are illustrated in Figure \ref{shape_ex2}. Consider the function
\[
f_3(t)=f_{3,1}(t)+f_{3,2}(t),
\]
where 
\[
f_{3,1}(t)=A_1(t)s_3(2\pi\phi_3(t))
\]
and
\[
f_{3,2}(t)=A_2(t)s_4(2\pi\phi_4(t)).
\]
Since $f_{3,1}(t)\in C^{1.2, 3}_{\epsilon}$ and $f_{3,2}(t)\in C^{0.5,4}_{\epsilon}$, we know $f_3(t)\in C^{1.2,4}_{\epsilon,d}$, where $\epsilon\approx 2/25$ and $d\approx1/9$. We take $\Delta=0.1$ in the simulation, and it is clear that the intervals $\left[\frac{1-\Delta}{n\phi'_3(t)},\frac{1+\Delta}{n\phi'_3(t)}\right]$ for $n=1,\ldots,4$ and $\left[\frac{1-\Delta}{m\phi'_4(t)},\frac{1+\Delta}{m\phi'_4(t)}\right]$ for $m=1,\ldots,3$ do not overlap, so the condition (\ref{thm:special1}) is satisfied, and hence we expect to be able to reconstruct $f_{3,1}(t)$ and $f_{3,2}(t)$ from $f_3$ via Synchrosqueezing transform. The extraction result is shown in the left panel of Figure \ref{ex2}. We also demonstrate the robustness of Synchrosqueezing transform in this case. Consider 
$$
f_4=f_3+\sigma W,
$$ 
where $W$ is the white noise with variance $1$ and $\sigma=\sqrt{\text{var}(f_3)10^{1/5}}$, that is, we add $-2$dB white noise to $f_3$ if the signal-to-noise ratio is defined by 
$$
\text{signal to noise ratio }(\text{dB})=10\log_{10}\left(\frac{\text{var}(f_3)}{\sigma^2}\right).
$$
We first demonstrate the Synchrosqueezing transform of $f_4$ and the reassigned Morlet scalogram \cite{auger_flandrin:1995} in Figure \ref{ex2:tfa}. Then we extract the components $f_{3,1}$ and $f_{3,2}$ by applying Synchrosqueezing transform on $f_4$. The result is shown in the right panel of Figure \ref{ex2}. Since reconstructing the components from the reassigned Morlet scalogram is not guaranteed, we only demonstrate the reconstruction results of the Synchrosqueezing transform. %The robustness result is clear from the result. Further study of the robustness result will be presented in a later paper.

\begin{figure}[h]
\subfigure{
\includegraphics[width=0.95\textwidth]{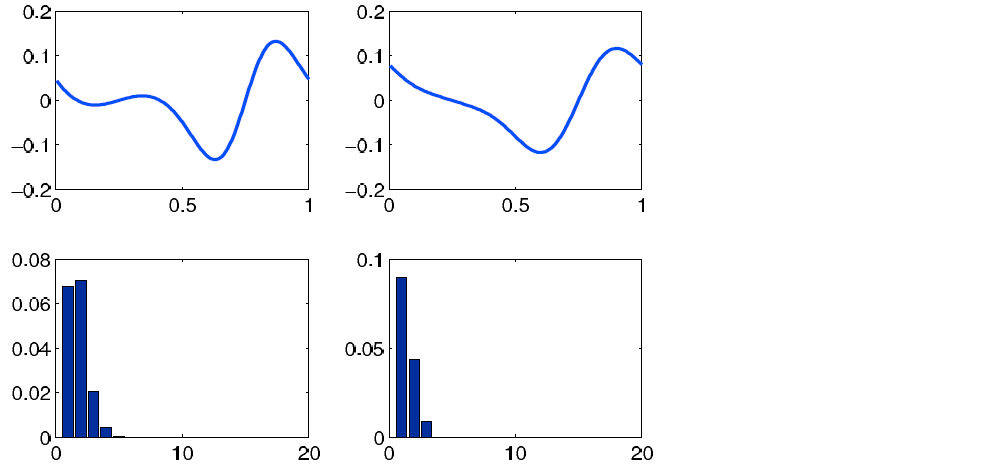}}
\caption{Top row: the $s_3$ (left) and $s_4$ (right) wave shape function; bottom row: the first $20$ Fourier modes of $s_3$ (left) and $s_4$ (right). Note that $\hat{s}_3(n)$ and $\hat{s}_4(n)$ are not zero but very small when $n>4$.}\label{shape_ex2}
\end{figure}

\begin{figure}[h]
\subfigure{
\includegraphics[width=0.95\textwidth]{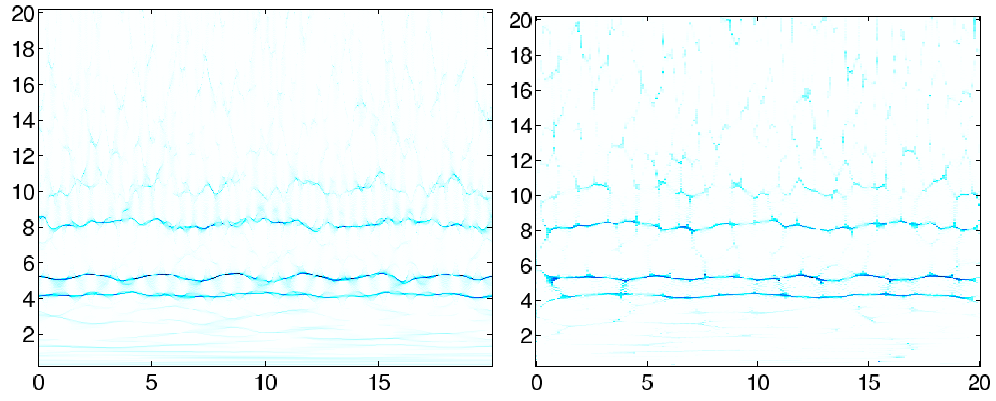}}
\caption{Left: the Synchrosqueezing transform of $f_4$; right: the reassigned Morlet scalogram. Notice that the performance of the Synchrosqueezing transform is equivalent to the reassigned Morlet scalogram for the purpose of extracting the instantaneous frequency.}\label{ex2:tfa}
\end{figure}

\begin{figure}[h]
\subfigure{
\includegraphics[width=0.95\textwidth]{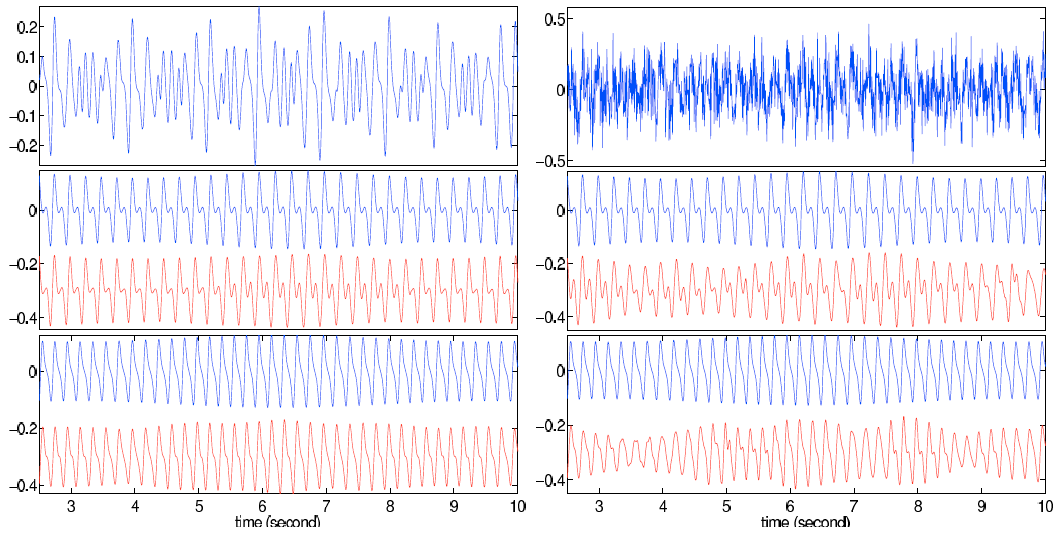}}
\caption{Left top: the $f_3$ signal; left middle: the blue curve is $A_1(t)s_3(2\pi\phi_3(t))$, while the red curve is the reconstructed $A_1(t)s_3(2\pi\phi_3(t))$, which is shifted down by $0.3$ to improve the readability; left bottom: the blue curve is $A_2(t)s_4(2\pi\phi_4(t))$, while the red curve is the reconstructed $A_2(t)s_4(2\pi\phi_4(t))$, which is shifted down by $0.3$ to improve the readability; right top: the $f_4$ signal; right middle: the blue curve is $A_1(t)s_3(2\pi\phi_3(t))$, while the red curve is the reconstructed $A_1(t)s_3(2\pi\phi_3(t))$, which is shifted down by $0.3$ to improve the readability; right bottom: the blue curve is $A_2(t)s_4(2\pi\phi_4(t))$, while the red curve is the reconstructed $A_2(t)s_4(2\pi\phi_4(t))$, which is shifted down by $0.3$ to improve the readability.}\label{ex2}
\end{figure}

\clearpage

\section{Acknowledgements}
The author acknowledges support by FHWA grant DTFH61-08-C-00028 and Award Number FA9550-09-1-0551 from AFOSR; he is also grateful for
 valuable discussions with Eugene Brevdo, Gaurav Thakur and Professor Ingrid Daubechies, and to Professor Chung-Kang Peng for kindly providing the respiratory signal during sleep and for many discussions. He thanks the anonymous reviewers for their useful recommendations to improve this paper.

\bibliographystyle{amsplain} \bibliographystyle{amsplain} \bibliographystyle{amsplain}
\bibliography{interIF}

\end{document}